\theoremstyle{plain}
\newtheorem{thm}{Theorem}[section]
\newtheorem{lemma}[thm]{Lemma}
\newtheorem{cor}[thm]{Corollary}
\newtheorem{prop}[thm]{Proposition}
\newtheorem{definition}[thm]{Definition}
\newtheorem{rem}[thm]{Remark}
\def\lr{{\mathcal{LR}}}
\def\Chi{X}
\def\tG{{\tilde G}}\def\tnu{{\tilde \nu}}\def\tH{{\tilde H}}\def\tP{{\tilde P}}
\def\tS{{\tilde S}}\def\tw{{\tilde w}}\def\tW{{\tilde W}}
\def\trho{{\tilde \rho}}
\def\Wt{{\rm Wt}}
\def\tLambda{\tilde\Lambda}
\newcommand{\bull}{_{\bullet}}
\newcommand{\SL}{{\rm{SL}}}
\newcommand{\SO}{{\rm{SO}}}
\newcommand{\gothG}{\mathfrak{g}}
\newcommand{\gothB}{\mathfrak{b}}
\newcommand{\gothH}{\mathfrak{h}}
\newcommand{\gothP}{\mathfrak{p}}
\newcommand{\gothL}{\mathfrak{l}}
\newcommand{\tgothG}{\tilde{\mathfrak{g}}}
\newcommand{\tgothB}{\tilde{\mathfrak{b}}}
\newcommand{\tgothH}{\tilde{\mathfrak{h}}}
\newcommand{\tgothP}{\tilde{\mathfrak{p}}}
\newcommand{\tgothL}{\tilde{\mathfrak{l}}}
\newcommand{\codim}{{\rm codim}}
\newcommand{\image}{{\rm Im}}
\newcommand{\diag}{{\rm diag}}
\newcommand{\Gr}{{\rm Gr}}
\newcommand{\OG}{{\rm OG}}
\newcommand{\Fl}{{\rm F\ell}}
\newcommand{\tilG}{\tilde{G}}
\newcommand{\tilH}{\tilde{H}}
\newcommand{\tilB}{\tilde{B}}
\newcommand{\tilP}{\tilde{P}}
\newcommand{\tilR}{\tilde{R}}
\newcommand{\tilT}{\tilde{T}}
\newcommand{\tilW}{\tilde{W}}
\newcommand{\Plam}{P(\lambda)}
\newcommand{\tilPl}{\tilde P(\lambda)}
\newcommand{\Llam}{L(\lambda)}
\newcommand{\tilLl}{\tilde L(\lambda)}
\newcommand{\tcalT}{\tilde{\mathcal{T}}}
\newcommand{\calT}{\mathcal{T}}
\newcommand{\ZZ}{{\mathbb Z}}\newcommand{\QQ}{{\mathbb Q}}
\newcommand{\Dim}{{\rm Dim}}
\newcommand{\CoDim}{{\rm CoDim}}
\def\Ker{{\rm Ker}}
\begin{document}

\title[Generalizations of $(H^*(G/P),\odot_0)$]
{Branching Schubert calculus and the Belkale-Kumar product on cohomology}

\author{Nicolas Ressayre}
\author{Edward Richmond}

\address{Universit´e Montpellier II\\ D´epartement de Math´ematiques\\ Case courrier 051-Place Eug`ene Bataillon\\ 34095 Montpellier Cedex 5\\France}
\email{resayre@math.univ-montp2.fr}

\address{Department of Mathematics\\ University of Oregon\\ Eugene,  OR 97402}
\email{erichmo2@uoregon.edu}

\maketitle

\begin{abstract} In \cite{BK06}, Belkale and Kumar define a new product on the cohomology of flag varieties and use this new product to give an improved solution to the eigencone problem for complex reductive groups.  In this paper, we give a generalization of the Belkale-Kumar product to the branching Schubert calculus setting.  The study of Branching Schubert calculus attempts to understand the induced map on cohomology of an equivariant embedding of flag varieties.  The main application of our work is a compact formulation of the solution to the branching eigencone problem.\end{abstract}

\section{Introduction}

\quad Let $G$ be a connected complex reductive group and let $\tilG$ be a connected reductive subgroup of $G$.  Let $i:\tilG\hookrightarrow G$ denote the embedding of groups.
 For any one parameter subgroup $\lambda:\mathbb{C}^*\rightarrow \tilG$, we have the corresponding parabolic subgroup

$$\tilPl:=\{g\in \tilG\ |\ \lim_{t\rightarrow 0} \lambda(t)g\lambda(t)^{-1}\ \mbox{exists in}\  \tilG\}.$$

Similarly, we define $\Plam:=P(i\circ\lambda)\subseteq G$.  Let $W_P\subseteq W$ denote the Weyl groups of $\Plam$ and $G$respectively.  For any $w\in W^P\simeq W/W_P$, let $\Lambda_w\subseteq G/\Plam$ denote the corresponding Schubert variety and let $[\Lambda_w]\in H^*(G/\Plam)=H^*(G/\Plam,\mathbb{Z})$ denote the Schubert class of $\Lambda_w$.  We also have Schubert varieties $\Lambda_{\tilde w}\subseteq\tilG/\tilPl$ and Schubert classes $[\Lambda_{\tilde w}]\in H^*(\tilG/\tilPl)$ for any $\tilde w\in \tilW^P\simeq \tilW/\tilW_P$.  Consider the $\tilG$-equivariant map of flag varieties $$\phi_{\lambda}:\tilG/\tilPl\hookrightarrow G/\Plam.$$

The problem concerning ``branching Schubert calculus" is to compute the comorphism $$\phi_{\lambda}^*([\Lambda_w])=\sum_{\tilde w\in\tilW^P}d_w^{\tilde w}[\Lambda_{\tilde w}]$$ in terms of the basis of Schubert classes in $H^*(\tilG/\tilPl)$.  Observe that if $G/\Plam=\tilG/\tilPl\times\tilG/\tilPl$ and $\phi_{\lambda}$ is the diagonal embedding, then

$$\phi_{\lambda}^*([\Lambda_{\tilde u}\times\Lambda_{\tilde v}])=[\Lambda_{\tilde u}]\cdot[\Lambda_{\tilde v}].$$

\quad In \cite{BK06}, Belkale and Kumar define the ring $(H^*(G/\Plam),\odot_0)$.
Additively, this ring is the same as $H^*(G/\Plam)$.
In section \ref{section main}, we construct a map
$$
\phi_{\lambda}^{\odot}:H^*(G/\Plam)\rightarrow H^*(\tilG/\tilPl)
$$ from $\phi_{\lambda}^*$.
This map is a generalization of Belkale-Kumar product in the sense that if we consider
the diagonal embedding where $G/\Plam=\tilG/\tilPl\times\tilG/\tilPl$ we have that
$$
\phi_{\lambda}^{\odot}([\Lambda_{\tilde u}\times\Lambda_{\tilde v}])=
[\Lambda_{\tilde u}]\odot_0[\Lambda_{\tilde v}].
$$
In general, cohomology equipped with $\odot_0$ is not functorial.
Our main result is on the functoriality of $\phi_{\lambda}^{\odot}$ with respect
to the Belkale-Kumar product $\odot_0$ and its relationship with the natural map
$\phi^*_{\lambda}$ on cohomology.
For any $(w,\tilde w)\in W^P\times \tilW^P$, define the structure constants
$c_w^{\tilde w}, d_w^{\tilde w}\in\mathbb{Z}_{\geq 0}$ by the comorphisms
$$
\phi_{\lambda}^{\odot}([\Lambda_w])=
\sum_{\tilde w\in\tilW^P}c_w^{\tilde w}[\Lambda_{\tilde w}]
$$
and
$$
\phi_{\lambda}^*([\Lambda_w])=\sum_{\tilde w\in\tilW^P}d_w^{\tilde w}[\Lambda_{\tilde w}].
$$

\begin{thm}\label{Thm1}
The map $\phi_{\lambda}^{\odot}$ is a graded ring homomorphism from $(H^*(G/\Plam),\odot_0)$
on $(H^*(\tilG/\tilPl),\odot_0)$.
\bigskip
Moreover, if $c_w^{\tilde w}\neq 0$, then $c_w^{\tilde w}=d_w^{\tilde w}$.\end{thm}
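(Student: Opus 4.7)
The strategy is to realize $\phi_\lambda^\odot$ as the associated graded of $\phi_\lambda^*$ with respect to natural character filtrations on either side, thereby reducing the ring homomorphism statement to functoriality of cup product.

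I would first recall from \cite{BK06} the character $\chi_w^P$ of the Levi $L$ associated to each $w\in W^P$ (built from the $T$-weights on the tangent space $T_{eP}(w^{-1}X_w)$) together with the Belkale--Kumar inequality: a cup-product structure constant $d_{u,v}^w$ on $H^*(G/\Plam)$ can be nonzero only when $\chi_u^P+\chi_v^P-\chi_w^P$ lies in a certain cone $\mathcal{C}_P$, and $\odot_0$ retains precisely the terms where equality holds. Thus cup product is filtered by characters, with associated graded equal to $\odot_0$. The construction of $\phi_\lambda^\odot$ in Section \ref{section main} should by analogous design set $c_w^{\tilde w}=d_w^{\tilde w}$ exactly when a branching character equality $\chi_w^P|_{\tilL}=\tilde\chi_{\tilde w}^{\tilP}$ (the branching analogue of Levi-movability) holds, and $c_w^{\tilde w}=0$ otherwise. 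With this construction the ``moreover'' clause is immediate.

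For the ring homomorphism property, I would establish the branching analogue of the Belkale--Kumar inequality: if $d_w^{\tilde w}\neq 0$ then $\chi_w^P|_{\tilL}-\tilde\chi_{\tilde w}^{\tilP}$ lies in a suitable cone, with equality iff $(w,\tilde w)$ is branching Levi-movable. Granted this, $\phi_\lambda^*$ is a filtered map, so its associated graded is a graded ring homomorphism from $(H^*(G/\Plam),\odot_0)$ to $(H^*(\tilG/\tilPl),\odot_0)$, since $\phi_\lambda^*$ is a ring map for cup product by functoriality of cohomology. By construction this associated graded coincides with $\phi_\lambda^\odot$, which gives the theorem.

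The main obstacle is the branching cone inequality. I expect to follow the geometric template of \cite{BK06}: differentiate $\phi_\lambda$ at the base point $e\tilP\mapsto eP$, compare $\tilT$-weights on $T_{e\tilP}(\tilde w^{-1}\tilde X_{\tilde w})$ with the restrictions via $di$ of weights on $T_{eP}(w^{-1}X_w)$, and invoke Kleiman transversality for the $\tilG$-equivariant embedding $\phi_\lambda$ to show that $\phi_\lambda^*([X_w])$ is represented by an effective cycle of the expected codimension. Effectivity combined with the tangent-space weight comparison forces the cone inequality, with equality precisely when the tangent spaces at the base point meet transversely after a Levi twist -- i.e.\ when branching Levi-movability holds.
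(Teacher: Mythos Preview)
Your proposal is correct and follows essentially the same strategy as the paper: both arguments rest on the branching numerical inequality (Proposition~\ref{Prop_char}) and then derive the ring homomorphism property from functoriality of cup product via a deformation/filtration argument, with the ``moreover'' clause immediate from the construction.

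One clarification worth making: the paper does not filter by the full character lattice of $\tilL$ or by a cone, but by a single integer, namely the value $\chi_w(i(\dot\lambda))\in\ZZ_{\geq 0}$ (and $\tilde\chi_{\tilde w}(\dot\lambda)$ on the target). Concretely, the paper introduces a one-variable deformation $\odot_\bullet$ over $\ZZ[\tau]$ on each side and a $\ZZ[\tau]$-linear map $\phi_\lambda^\bullet$ with exponents $\tilde\chi_{\tilde w}(\dot\lambda)-\chi_w(i(\dot\lambda))$; Proposition~\ref{Prop_char} guarantees these exponents are nonnegative, so $\phi_\lambda^\bullet$ is well defined, and a direct computation (Proposition~\ref{prop_hmorph}) shows it is a ring map because $\phi_\lambda^*$ is. Setting $\tau=0$ recovers $\odot_0$ on both sides (Lemma~\ref{lemma_bkprod}) and yields $\phi_\lambda^\odot$. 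This is exactly your ``associated graded of a filtered map'' picture, with the filtration being the $\ZZ$-filtration induced by pairing with $\dot\lambda$; your phrasing in terms of $\chi_w^P|_{\tilL}=\tilde\chi_{\tilde w}^{\tilP}$ is slightly stronger than what is actually used or needed.
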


The proof of the above theorem requires a modification on the construction of the Belkale-Kumar product in \cite{BK06}.
In \cite{Re07}, the first author gives a minimal list of inequalities which characterize the eigencone of the pair $\tilG\subseteq G$.  In section \ref{section eigen}, we use the comorphism $\phi_{\lambda}^{\odot}$ to give a more elegant formulation of this statement.

\section{Preliminaries and Levi-movability}

\quad Fix maximal tori $\tilH\subseteq H$ of $\tilG$ and $G$ respectively such that $\image(\lambda)\subseteq\tilH$.
Furthermore, fix Borel subgroups $\tilB$ and $B$ of $\tilG$ and $G$ respectively such that $\tilH\subseteq\tilB\subseteq\tilPl$ and $H\subseteq B\subseteq \Plam$ and $\tilB=\tilG\cap B$.
Observe that such Borel subgroups always exist by choosing an appropriate generic rational
one parameter subgroup $\lambda'$ close to $\lambda$ and setting
$\tilB=\tilP(\lambda')$ (resp. $B=P(\lambda')$).
Let $W_P\subseteq W$ denote the Weyl groups of $\Plam$ and $G$ respectively and let $W^P$ denote the set of minimal length representatives of $W/W_P$.
For any $w\in W^P$, we define the shifted Schubert variety $$\Lambda_w:=\overline{w^{-1}Bw\Plam/\Plam}.$$  The cohomology classes $\{[\Lambda_w]\}_{w\in W^P}$ form an additive basis for $H^*(G/\Plam).$  For any $\tilde w\in \tilW^P\simeq \tilW/\tilW_P$, we will denote the corresponding Schubert variety in $\tilG/\tilPl$ by $\Lambda_{\tilde w}$.

\subsection{A generalization of Levi-movability}

\quad Our discussion begins with a generalized notion of Levi-movable defined in \cite{BK06}.
Define the Levi subgroup $\Llam\subseteq\Plam$ to be the centralizer of $\image(\lambda)$ in $G$.  For any $w\in W^P$, consider the comorphism $$\phi_{\lambda}^*([\Lambda_w])=\sum_{\tilde w\in\tilW^P}d_w^{\tilde w}[\Lambda_{\tilde w}],$$ expanded in the Schubert basis.  Let $w_0, w_P$ denote the longest elements in $W$ and $W_P$ respectively (we also have longest elements $\tilde w_0$ and $\tilde w_P$ in $\tilW$ and $\tilW_P$ accordingly) and for any $w\in W^P$ (resp. $\tilde w\in \tilW^P$), let $w^{\vee}:=w_0ww_P\in W^P$ (resp. $\tilde w^{\vee}\in \tilW^P$).  By Kleiman's tranversality \cite{Kl74}, if the coefficient $d_w^{\tilde w}\neq 0$, then it can be realized as the cardinality of the intersection of translates $$|\phi_{\lambda}^{-1}(g\Lambda_w)\cap \tilde g\Lambda_{\tilde w^{\vee}}|=d_w^{\tilde w}$$ in $\tilG/\tilPl$ for generic $(g,\tilde g)\in G\times\tilG$.  The following lemma is proved in \cite{BK06}:

\begin{lemma}\label{Lemma gtop}If $eP\in g\Lambda_w$, then there exists a $p\in \Plam$, such that $g\Lambda_w=p\Lambda_w$.\end{lemma}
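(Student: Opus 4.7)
The plan is to exploit the defining description $\Lambda_w=\overline{w^{-1}Bw\cdot eP}$ together with the fact that the Borel subgroup $w^{-1}Bw$ stabilizes $\Lambda_w$ in $G/\Plam$. Rewriting the hypothesis $eP\in g\Lambda_w$ as $y:=g^{-1}\cdot eP\in\Lambda_w$, the conclusion $g\Lambda_w=p\Lambda_w$ with $p\in\Plam$ is equivalent to exhibiting a factorization $g=p\sigma$ with $\sigma\in\mathrm{Stab}_G(\Lambda_w)$; equivalently, showing $g\in\Plam\cdot\mathrm{Stab}_G(\Lambda_w)$.

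First I would treat the generic case in which $y$ lies in the dense open Schubert cell $\Lambda_w^{\circ}:=w^{-1}Bw\cdot eP$. Then $y=b\cdot eP$ for some $b\in w^{-1}Bw$, and the identity $g\cdot y=eP$ becomes $gb\cdot eP=eP$. Since $\mathrm{Stab}_G(eP)=\Plam$, this forces $p:=gb\in\Plam$, and therefore $g=pb^{-1}$. Because $b^{-1}\in w^{-1}Bw$ preserves $\Lambda_w$, one obtains
\begin{equation*}
g\Lambda_w \;=\; p\,b^{-1}\Lambda_w \;=\; p\Lambda_w,
\end{equation*}
which is the desired conclusion.

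The main obstacle is handling the boundary case $y\in\Lambda_w\setminus\Lambda_w^{\circ}$, where $y$ lies in a lower-dimensional $w^{-1}Bw$-orbit. I expect to deal with this by a density/closedness argument. The set $\{g:eP\in g\Lambda_w\}$ is closed in $G$ (it is the preimage of the closed subvariety $\Lambda_w$ under the morphism $g\mapsto g^{-1}\cdot eP$), and the open-cell computation shows that the condition $g\in\Plam\cdot\mathrm{Stab}_G(\Lambda_w)$ holds on the dense open subset $\{g:g^{-1}\cdot eP\in\Lambda_w^{\circ}\}$. Since $\mathrm{Stab}_G(\Lambda_w)$ is a conjugate parabolic subgroup of $G$ --- namely $w^{-1}\bigl(\mathrm{Stab}_G(X_w)\bigr)w$, where $\mathrm{Stab}_G(X_w)$ is the usual parabolic stabilizer of the ordinary Schubert variety --- the set $\Plam\cdot\mathrm{Stab}_G(\Lambda_w)$ is a union of finitely many $(\Plam,\mathrm{Stab}_G(\Lambda_w))$-double cosets, and its closedness in $G$ propagates the open-cell conclusion to every $g$ satisfying $eP\in g\Lambda_w$. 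Verifying this closedness step, which amounts to a standard analysis of products of parabolic subgroups, is the principal technical delicacy beyond the elementary factoring carried out in the open cell.
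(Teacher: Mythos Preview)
The paper does not give its own proof of this lemma; it simply cites \cite{BK06}. Your open-cell argument is correct and is exactly the Belkale--Kumar argument: if $y=g^{-1}\cdot eP$ lies in $\Lambda_w^\circ=w^{-1}Bw\cdot eP$, write $y=b\cdot eP$ with $b\in w^{-1}Bw$, deduce $p:=gb\in P$, and conclude $g\Lambda_w=p\,b^{-1}\Lambda_w=p\Lambda_w$. In fact the lemma in \cite{BK06} is stated for the open shifted Schubert \emph{cell} rather than its closure, and it is this cell version that the transversality application immediately following the lemma actually requires (one only needs the conclusion for generic $g$ with $eP\in g\Lambda_w$).

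Your proposed extension to the boundary does not go through: the closedness you flag as the ``principal technical delicacy'' genuinely fails, and with it the statement for the closed variety $\Lambda_w$. A product of a standard parabolic with a non-standard conjugate parabolic need not be closed. Concretely, take $G=\SL_3$, $P=B$, and $w=s_1s_2$, so that $\Lambda_w=w^{-1}X_w$ with $X_w=\{V_\bullet : V_1\subset\langle e_1,e_2\rangle\}$ and $\mathrm{Stab}_G(X_w)=Q:=\mathrm{Stab}_G(\langle e_1,e_2\rangle)$. For $g=w$ one has $g\Lambda_w=X_w\ni eB$, yet $g\Lambda_w=p\Lambda_w$ would force $pw^{-1}\in Q$, i.e.\ $p\in Qw=Qs_2\subset Bs_2B\cup Bs_1s_2B$, which is disjoint from $B$. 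Hence no $p\in B$ exists, the set $B\cdot\mathrm{Stab}_G(\Lambda_w)=B\cdot w^{-1}Qw$ is not closed, and the lemma read literally for the closed $\Lambda_w$ is false. Your first paragraph already establishes the correct cell version in full; the density step should simply be discarded.
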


Let $T$ and $\tilT$ denote the tangent spaces of $G/\Plam$ and $\tilG/\tilPl$ at the identity and for any $(p,\tilde p)\in\Plam\times\tilPl$ and $(w,\tilde w)\in W^P\times\tilW^P$  let $pT_w$ and $\tilde p\tilT_{\tilde w}$ denote the tangent spaces of $p\Lambda_w$ and $\tilde p\tilde\Lambda_{\tilde w}$ at the identity.
 Assume that
$$\codim(\Lambda_w;G/\Plam)=\codim(\tilde\Lambda_{\tilde w};\tilG/\tilPl).$$
Otherwise, $d_w^{\tilde w}=0$.
By Lemma \ref{Lemma gtop}, the coefficient $d_w^{\tilde w}\neq 0$ if and only if the intersection $$\phi_{\lambda}^{-1}(p\Lambda_w)\cap \tilde p\Lambda_{\tilde w^{\vee}}$$ is transverse at the point $e\tilPl\in\tilG/\tilPl$ for generic $(p,\tilde p)\in\Plam\times\tilPl.$  This is equivalent to having an isomorphism on the map between tangent spaces

\begin{equation}\label{eqn Tangent}\tilT \rightarrow \frac{T}{pT_w}\oplus\frac{\tilT}{\tilde p\tilT_{\tilde w^{\vee}}}\end{equation} given by $v\mapsto (\overline{(\phi_{\lambda})_*(v)},\overline v)$, for generic $(p,\tilde p)\in\Plam\times\tilPl$.
 The following definition is a generalization of Levi-movable and is given in \cite{Ri09}.

\begin{definition}
We say $(w,\tilde w)\in W^P$ is Levi-movable with respect to $\phi_{\lambda}$
if for generic $(l,\tilde l)\in\Llam\times\tilLl$ the following natural map on
tangent spaces is an isomorphism:
$$\tilT \rightarrow \frac{T}{lT_w}\oplus\frac{\tilT}{\tilde l\tilT_{\tilde w}^\vee}.$$\end{definition}

Observe that if $(w,\tilde w^{\vee})\in W^P$ is Levi-movable with respect to $\phi_{\lambda}$, then  $d_w^{\tilde w}\neq 0$.  The converse is not true in general.

\subsection{The Belkale-Kumar numerical criterion}

\quad We now want to explain how the Belkale-Kumar numerical criterion can be generalized
to our setting.
We first establish some notation for root systems associated to Lie algebras.
Denote the Lie algebras of groups $G,H,B,\Plam,\Llam$ by the corresponding frankfurt letters $\gothG,\gothH,\gothB,\gothP,\gothL_P$.
 Similarly we have Lie algebras $\tgothG,\tgothH,\tgothB,\tgothP,\tgothL_P$ for subgroups of $\tilG$.

\smallskip

\quad Let $R\subseteq \gothH^*$ be the set of roots and let $R^{\pm}\subseteq R$ denote the set of positive roots (negative roots) with respect to the Borel subgroup $B$.
 Let $\Delta=\{\alpha_1,\ldots,\alpha_n\}$ denote the simple roots in $R$.
 Let $R_P$ denote the set of roots corresponding to $\gothL_P$ and let $R_P^{\pm}$ denote the set of positive roots (negative roots) with respect to the Borel subgroup $B_P:=B\cap \Llam$ of $\Llam$.
 Let $\Delta(P)$ be the set of simple roots that generate $R_P^+$.
 Similarly, we have the roots $\tilR, \tilR^{\pm}, \tilR_P,\tilR_P^{\pm},\tilde\Delta, \tilde\Delta(P)\subseteq\tgothH^*.$

\smallskip

\quad The following character is defined in \cite{BK06} and will play an important role in constructing $\phi_{\lambda}^{\odot}$.
 For $w\in W^P$, define $\chi_w\in\gothH^*$ by
$$\chi_w:=\sum_{\beta\in (R^+\backslash R_P^+)\, \cap\, w^{-1}R^+}\beta.$$
Similarly, for any $\tilde w\in\tilW^P$, we can define $\tilde\chi_{\tilde w}\in\tgothH^*$.
Define
$$\dot{\lambda}:=\frac{d}{dt}\lambda(1)\in\tgothH.$$
Observe that $\alpha(\dot\lambda)\in\mathbb{Z}$ for any $\alpha\in\tilR$ since $\lambda$ is a one parameter subgroup of $\tilH$.  Moreover, for any $\tilR^+$, we have that $\alpha(\dot\lambda)\geq 0$ with equality only when $\alpha\in\tilR^+_P$.  This implies that $\tilde{\chi}_{\tilde w}(\dot\lambda)$ is integral and non negative.  Likewise, we have that $i^*(\chi_w)(\dot\lambda)$ is also integral and non negative since $i\circ\lambda$ is a one parameter subgroup of $H$.  Here we are abusing notation by letting $i:\tgothH\hookrightarrow \gothH$ denote the induced map on Cartan subalgebras.  These characters are connected to the tangent spaces given in \eqref{eqn Tangent} in the sense that $\gothH$ acts on complex line $\displaystyle\det\left(T/T_w\right)$ by multiplication by $\chi_{w}$.

\begin{prop}\label{Prop_char}Let $(w,\tilde w^{\vee})\in W^P\times \tilde W^P$ such that $d_w^{\tilde w}\neq 0$.
 Then $$(i^*(\chi_w)-\tilde\chi_{\tilde w})(\dot\lambda)\leq 0.$$  Moreover, $(w,\tilde w^{\vee})$ is Levi-movable with respect to $\phi_{\lambda}$ if and only if $(i^*(\chi_w)-\tilde\chi_{\tilde w})(\dot\lambda)=0.$\end{prop}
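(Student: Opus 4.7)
My plan is a degeneration argument using the cocharacter $\lambda$, in the spirit of the Belkale--Kumar numerical criterion. First I would reformulate both conditions tangentially: by the discussion preceding the proposition, $d_w^{\tilde w} \neq 0$ is equivalent to the map $\Phi_{p, \tilde p}$ of \eqref{eqn Tangent} being an isomorphism for generic $(p, \tilde p) \in \Plam \times \tilPl$, while Levi-movability of $(w, \tilde w^\vee)$ is the same requirement for generic $(l, \tilde l) \in \Llam \times \tilLl$.

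For $(p, \tilde p) \in \Plam \times \tilPl$, introduce the one-parameter family
$$(p(t), \tilde p(t)) := (\lambda(t)\,p\,\lambda(t)^{-1},\ \lambda(t)\,\tilde p\,\lambda(t)^{-1}),$$
which by definition of $\Plam$ and $\tilPl$ is regular at $t=0$ with limit $(p_L,\tilde p_L) \in \Llam \times \tilLl$. Since $T_w$ and $\tilT_{\tilde w^\vee}$ are direct sums of $H$- respectively $\tilH$-root spaces, each is preserved by $\lambda(t)$; combined with the $\tilG$-equivariance of $\phi_\lambda$ applied to $\lambda(t) \in \tilH$, this produces a commutative square
$$
\begin{CD}
\tilT @>{\Phi_{p, \tilde p}}>> T/pT_w \oplus \tilT/\tilde p \tilT_{\tilde w^\vee} \\
@V{\lambda(t)}VV @VV{\bar\lambda(t)}V \\
\tilT @>{\Phi_{p(t), \tilde p(t)}}>> T/p(t)T_w \oplus \tilT/\tilde p(t) \tilT_{\tilde w^\vee}
\end{CD}
$$
whose vertical arrows are the linear actions of $\lambda(t)$ on the corresponding quotient spaces.

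To take determinants over a target that does not depend on $t$, I would replace $\Phi_{p, \tilde p}$ by the equivalent map between fixed vector spaces
$$\Psi_{p, \tilde p}\colon \tilT \oplus T_w \oplus \tilT_{\tilde w^\vee} \to T \oplus \tilT,\quad (v, x, \tilde x) \mapsto \bigl((\phi_\lambda)_*(v) + px,\ v + \tilde p \tilde x\bigr),$$
which is invertible if and only if $\Phi_{p, \tilde p}$ is. Then $\det \Psi_{p(t), \tilde p(t)}$ is literally a polynomial in $t$, and the analogue of the square above yields
$$\det \Psi_{p(t), \tilde p(t)} = t^{E}\, \det \Psi_{p, \tilde p}$$
for some exponent $E$ which is a linear combination of the $\dot\lambda$-weights of the determinant lines $\det T$, $\det \tilT$, $\det T_w$ and $\det \tilT_{\tilde w^\vee}$. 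Using that $\det(T/T_w)$ has $\gothH$-weight $\chi_w$ together with the identity $\tilde\chi_{\tilde w^\vee}(\dot\lambda) + \tilde\chi_{\tilde w}(\dot\lambda) = \tilde\chi_e(\dot\lambda)$ (which follows from the fact that $\tilde w_P$ preserves $\tilR^+ \setminus \tilR_P^+$ modulo $\tilR_P$, on which $\dot\lambda$ vanishes), a direct computation gives $E = (\tilde\chi_{\tilde w} - i^*\chi_w)(\dot\lambda)$.

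The proposition then follows at once. Since the left-hand side is a polynomial in $t$ and the hypothesis $d_w^{\tilde w} \neq 0$ makes $\det \Psi_{p, \tilde p} \neq 0$ for generic $(p, \tilde p)$, the right-hand side is a nonzero monomial, which forces $E \geq 0$, i.e., $(i^*\chi_w - \tilde\chi_{\tilde w})(\dot\lambda) \leq 0$. Equality $E = 0$ is equivalent to this monomial being a nonzero constant, i.e., to $\det \Psi_{p_L, \tilde p_L} \neq 0$ for generic $(p, \tilde p)$; since $(p_L, \tilde p_L)$ sweeps out a dense subset of $\Llam \times \tilLl$ as $(p, \tilde p)$ vary in $\Plam \times \tilPl$, this is precisely Levi-movability. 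The main technical difficulty I foresee is the determinant computation producing $E$: keeping track of which $\dot\lambda$-weight sits on each of the four determinant lines and reconciling the sign conventions implicit in $\chi_w$, $\tilde\chi_{\tilde w^\vee}$ and $\tilde\chi_e$.
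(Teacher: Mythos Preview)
Your argument is correct and complete; the key identity $\tilde\chi_{\tilde w^\vee}(\dot\lambda)+\tilde\chi_{\tilde w}(\dot\lambda)=\tilde\chi_e(\dot\lambda)$ indeed follows exactly as you indicate, since $\tilde w_P$ sets up a bijection between $(\tilde R^+\setminus\tilde R_P^+)\cap(\tilde w^\vee)^{-1}\tilde R^+$ and $(\tilde R^+\setminus\tilde R_P^+)\cap\tilde w^{-1}\tilde R^-$, and $\tilde w_P\beta-\beta$ lies in the span of $\tilde\Delta(P)$ on which $\dot\lambda$ vanishes. The computation of $E$ then comes out as you claim.

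The route, however, differs from the paper's. The paper packages the tangent maps into $\tilPl$-equivariant vector bundles on $\Plam/B_L\times\tilPl/\tilde B_L$, interprets $\det(\Theta)$ as a nonzero $\tilPl$-invariant section of a line bundle, and then invokes the Hilbert--Mumford criterion applied to $\lambda$ to obtain the inequality; the Levi-movable case is read off from the restriction to $\Llam/B_L\times\tilLl/\tilde B_L$, where $\lambda$ is central and hence acts trivially. Your argument is an elementary unwinding of that same Hilbert--Mumford step: rather than citing semistability, you conjugate $(p,\tilde p)$ directly by $\lambda(t)$, use that the limit lands in $\Llam\times\tilLl$ by the very definition of $\Plam$, and compare the resulting monomial $t^E$ against the regularity of $\det\Psi_{p(t),\tilde p(t)}$ at $t=0$. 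What you gain is self-containment (no vector-bundle construction, no GIT vocabulary) and a transparent identification of the exponent $E$ with the character difference; what the paper's formulation gains is a conceptual link to GIT that is reused later in their description of the eigencone. Both arguments ultimately rest on the same degeneration, so neither is more general, but yours is pedagogically cleaner for this isolated statement.
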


\begin{proof}The proposition is proved in the second author's thesis \cite{RiThesis} and for the diagonal embedding by Belkale and Kumar in \cite[Theorem 15, Theorem 29]{BK06}.
 We give a sketch of the proof here.

\smallskip

\quad For any $w\in W^P$, let $\calT_w:=\Plam\times_{B_L}T_w$ denote the corresponding $\Plam$-equivariant vector bundle on $\Plam/B_L$.
 Observe that a $T_w$ is a $B_L$-module since the action of $B_L$ on $\Lambda_w$ fixes the identity.
 If $T_w=T$, then we denote $\calT_w$ by simply $\calT$.
 For any $\tilde w\in \tilW^P$, we can define analogous $\tilPl$-equivariant vector bundles $\tcalT_{\tilde w}$ on $\tilPl/\tilB_L$.
 The map on tangent spaces given \eqref{eqn Tangent} induces a $\tilPl$-equivariant map on vector bundles $$\Theta: \tcalT' \oplus \tcalT\rightarrow \calT/\calT_w\oplus\tcalT/\tcalT_{\tilde w^{\vee}}$$ on $\Plam/B_L\times \tilPl/\tilB_L$ where $\tilPl$ acts diagonally on $\tcalT':=\Plam/B_L\times\tilT$.

\smallskip

\quad If $d_w^{\tilde w}\neq 0$, then the map \eqref{eqn Tangent} is an isomorphism for generic $(p,\tilde p)\in\Plam\times\tilPl$.
 Hence the induced determinant map $\det(\Theta)$ on top exterior powers is nonzero.
 The map $\det(\Theta)$ can be viewed as a nonzero $\tilPl$-invariant section of the line bundle
$$\mathcal{L}:=(\det\tcalT' \boxtimes \det\tcalT)^*\otimes(\det\calT/\calT_w\boxtimes\det\tcalT/\tcalT_{\tilde w^{\vee}})$$ on $\Plam/B_L\times \tilPl/\tilB_L$.
 Hence the points in $\Plam/B_L\times \tilPl/\tilB_L$ are generically semi-stable with respect to action of $\tilPl$ on $\mathcal{L}$.
 The Hilbert-Mumford criterion for semi-stability implies that $$(i^*(\chi_w)+\tilde\chi_{\tilde w^{\vee}}-\tilde\chi_1)(\dot\lambda)=(i^*(\chi_w)-\tilde\chi_{\tilde w})(\dot\lambda)\leq 0.$$

\quad If $(w,\tilde w^{\vee})$ is Levi-movable with respect to $\phi_{\lambda}$, then the restriction of $\det(\Theta)$ to $\Llam/B_L\times \tilLl/\tilB_L$ is also nonzero.
 Since $\lambda$ is central acting diagonally on $\Llam\times \tilLl$, we have that $\lambda$ acts trivially on $\mathcal{L}$ restricted to $\Llam/B_L\times \tilLl/\tilB_L$.
 Hence \begin{equation}\label{Levi_cond}(i^*(\chi_w)-\tilde\chi_{\tilde w})(\dot\lambda)= 0.\end{equation}  Conversely, if \eqref{Levi_cond} is satisfied and  $d_w^{\tilde w}\neq 0$, then $\det(\Theta)$ restricted to $\Llam/B_L\times \tilLl/\tilB_L$ is nonzero.
 This implies the map \eqref{eqn Tangent} is an isomorphism for generic $(l,\tilde l)\in\Llam\times\tilLl$ and hence $(w,\tilde w^{\vee})$ is Levi-movable.\end{proof}

\subsection{Revisiting the numerical criterion}\label{sec:defTi}

\quad For the ordinary comorphism $\phi_\lambda^*$, there is an obvious numerical condition for a structure
coefficient to be non zero: namely, the dimension (or degree) condition.  We explain how Levi-movability can be checked by a multidimension condition.

\smallskip

\quad For any $i\in\ZZ$, we set $T^i:=\{\xi\in T\,:\,\lambda(t)\xi=t^i\xi\}$ and
$T_w^i=T^i\cap T_w$.
Note that $T^i=\{0\}$ for $i\geq 0$ and for almost all $i<0$.
Since the translated Schubert cells are stable by the action of $\lambda$, we have:
$$
T=\oplus_{i\in\ZZ_{<0}}T^i{\rm\ \ and\ \ }T_w=\oplus_{i\in\ZZ_{<0}}T^i_w.
$$
In the same way we define $\tilde T^i$ and $\tilde T_{\tilde w}^i$.
Now, for all $i\in\ZZ_{<0}$, we set
$d^i=\dim T^i$, $\delta^i_w=d^i-\dim T_w^i$,
$\tilde d^i=\dim \tilde T^i$ and
$\delta_{\tilde w}^i=\tilde d^i-\dim \tilde T_{\tilde w}^i$.
We now form the following vector dimension and codimension:
$$
\Dim(\tilde T)=\bigg (d^i\bigg)_{i\in\ZZ_{<0}},\
\CoDim(\tilde T_{\tilde w})=\bigg (\delta_{\tilde w}^i\bigg)_{i\in\ZZ_{<0}}
\ {\rm and\ \ }
\CoDim(T_w)=\bigg (\delta_w^i\bigg)_{i\in\ZZ_{<0}}.
$$

\begin{prop}
\label{Prop_Dim}
Let $(w,\tilde w^{\vee})\in W^P\times \tilde W^P$ such that $d_w^{\tilde w}\neq 0$.
Then the following are equivalent
\begin{enumerate}
\item $(w,\tilde w^{\vee})$ is Levi-movable with respect to $\phi_{\lambda}$;
\item $\Dim(\tilde T)=\CoDim(\tilde T_{\tilde w})+\CoDim(T_w)$.
\end{enumerate}
\end{prop}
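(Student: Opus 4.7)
The plan is to exploit that $\lambda$ is central in both $L(\lambda)$ and $\tilL(\lambda)$, so that the Levi actions preserve the $\lambda$-weight gradings on the relevant tangent spaces, and to track the decomposition of the tangent map of \eqref{eqn Tangent} accordingly. For any $l\in L(\lambda)$ the translate $lT_w$ respects the weight grading, $lT_w=\bigoplus_{i<0} l(T_w^i)$ with $l(T_w^i)\subseteq T^i$, and similarly $\tilde l\tilde T_{\tilde w^\vee}=\bigoplus_{i<0}\tilde l(\tilde T^i_{\tilde w^\vee})$; moreover $(\phi_{\lambda})_*:\tilde T\to T$ is $\tilG$-equivariant, hence $\lambda$-equivariant, so maps $\tilde T^i$ into $T^i$. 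Consequently the map
$$
\Psi_{l,\tilde l}:\tilde T\longrightarrow T/lT_w\oplus \tilde T/\tilde l\tilde T_{\tilde w^\vee}
$$
from \eqref{eqn Tangent} decomposes as a direct sum of weight-component maps
$$
\Psi^i_{l,\tilde l}:\tilde T^i\longrightarrow T^i/lT_w^i\oplus \tilde T^i/\tilde l\tilde T^i_{\tilde w^\vee},
$$
and $\Psi_{l,\tilde l}$ is an isomorphism if and only if each $\Psi^i_{l,\tilde l}$ is.

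For the implication $(1)\Rightarrow (2)$: if $(w,\tilde w^\vee)$ is Levi-movable, pick generic $(l,\tilde l)\in L(\lambda)\times\tilL(\lambda)$ so that $\Psi_{l,\tilde l}$ is an isomorphism; by the decomposition each $\Psi^i_{l,\tilde l}$ is then an isomorphism, forcing the per-weight matching $\tilde d^i=\codim_{T^i} T_w^i+\codim_{\tilde T^i}\tilde T^i_{\tilde w^\vee}$ for every $i<0$. Using the Poincar\'e-type duality $\dim\tilde T^i_{\tilde w^\vee}=\tilde d^i-\dim\tilde T^i_{\tilde w}$---obtained from the bijection of weight-$i$ roots in $\tilde R^-\setminus\tilde R^-_P$ induced by $\tilde w_P$ (which preserves $\lambda$-weights since $\dot\lambda$ is centralized by $\tilW_P$)---this rearranges to the asserted multidimension identity $\Dim(\tilde T)=\CoDim(\tilde T_{\tilde w})+\CoDim(T_w)$.

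For the implication $(2)\Rightarrow (1)$: summing the per-weight equality over $i<0$ weighted by $-i$, and using $\sum_i(-i)d^i=\chi_1(\dot\lambda)$, $\sum_i(-i)\delta_w^i=\chi_w(\dot\lambda)$, together with the analogous tilde identities and the relation $\tilde\chi_1-\tilde\chi_{\tilde w^\vee}=\tilde\chi_{\tilde w}$ on $\dot\lambda$ already established in the proof of Proposition \ref{Prop_char}, one obtains the scalar character identity $(i^*(\chi_w)-\tilde\chi_{\tilde w})(\dot\lambda)=0$. Combined with the hypothesis $d_w^{\tilde w}\neq 0$, Proposition \ref{Prop_char} then delivers Levi-movability of $(w,\tilde w^\vee)$.

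The main obstacle is the converse direction $(2)\Rightarrow (1)$: the multidimension equality is strictly stronger than the scalar character identity, and a direct proof would have to show that per-weight dimension matching actually forces $\Psi_{l,\tilde l}$ to be generically an isomorphism on $L(\lambda)\times\tilL(\lambda)$---not merely to have matching fibre dimensions. We avoid this subtlety by reducing the converse to Proposition \ref{Prop_char}, into which the hard Hilbert-Mumford-type semi-stability content has already been packaged.
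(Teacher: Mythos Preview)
Your argument follows essentially the same route as the paper's: for $(1)\Rightarrow(2)$ you use that $\lambda$ is central in the Levi to make the tangent map $\lambda$-equivariant and hence split it into isomorphisms on each weight component, and for $(2)\Rightarrow(1)$ you pass from the per-weight identity to the scalar character condition $(i^*(\chi_w)-\tilde\chi_{\tilde w})(\dot\lambda)=0$ and then invoke Proposition~\ref{Prop_char}. The paper's proof is terser---it does not spell out the Poincar\'e-type duality between $\tilde T_{\tilde w}$ and $\tilde T_{\tilde w^\vee}$ or the explicit weighting by $-i$ that you include---but the two arguments are the same in substance.
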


\begin{proof}
  Let us first assume that $(w,\tilde w^{\vee})$ is Levi-movable with respect to $\phi_{\lambda}$.
Let $(l,\tilde l)\in\Llam\times\tilLl$ be such that the natural map
$$\tilT \rightarrow \frac{T}{lT_w}\oplus\frac{\tilT}{\tilde l\tilT_{\tilde w}}$$
is an isomorphism.
Since this linear map is $\lambda$-equivariant, it induces an isomorphism between
each $\lambda$-eigenspace.
Then, the equality of the vector dimensions in the proposition follows
from the fact that $\lambda$ commutes with $l$ and $\tilde l$.

\smallskip

\quad Conversely, let us assume that $\Dim(\tilde T)=\CoDim(\tilde T_{\tilde w})+\CoDim(T_w)$.
One easily checks that
$(i^*(\chi_w)-\tilde\chi_{\tilde w})(\dot\lambda)=
\sum_id^i-\sum_i(\delta_{\tilde w^i}+\delta_w^i)=0$.
Now, the result follows from  Proposition~\ref{Prop_char}.
\end{proof}

\begin{rem}
  Proposition~\ref{Prop_Dim} can be applied with any one parameter subgroup giving
$\tilde P$ and $P$.
To obtain  optimal decompositions of $\tilde T$ and $T$  one should choose
a generic one parameter subgroup giving $\tilde P$ and $P$.
\end{rem}

\subsection{The Azad-Barry-Seitz theorem}

\quad In this subsection, we explain how the  Azad-Barry-Seitz theorem (see \cite{AzBaSe})
gives another interpretation of the $T^i$'s in the case of $G\subset G\times G$ (we omit the tilde above $G$ for simplicity).

\smallskip

\quad We are interested in the action of $L(\lambda)$ on $T=\gothG/\gothP$.
For any $\alpha\in R$, we denote by $\gothG_\alpha$ the eigenspace in
$\gothG$ of weight $\alpha$ for $H$.
Since $T$ has no multiplicity for the action of $H$, it has no multiplicity for the action
of $L(\lambda)$ and, hence, has  a canonical decomposition  $T=\oplus_jV_j$ as a sum of irreducible $L(\lambda)$-modules.  Since $H\subset L(\lambda)$, each $V_i$ is a sum of $\gothG_\alpha$ for some $\alpha\in R^+\backslash R^+_P$:
the decomposition $T=\oplus_jV_j$ corresponds to a partition $R^+\backslash R^+_P=\bigsqcup_jR_j$.

Let $\beta$ and $\beta'$ be two negative roots.
We write
\begin{eqnarray}
  \label{eq:beta}
\beta=\sum_{\alpha\in \Delta(P)}c_\alpha\alpha + \sum_{\alpha\in\Delta\backslash\Delta(P)}d_\alpha\alpha,
\end{eqnarray}
with $c_\alpha,d_\alpha\in\ZZ_{\leq 0}$.
We also write $\beta'$ in the same way with some $c_\alpha'$ and $d_\alpha'$.
We write $\beta\equiv\beta'$ if and only if $\sum_{\alpha\in\Delta\backslash\Delta(P)}d_\alpha\alpha=
\sum_{\alpha\in\Delta\backslash\Delta(P)}d_\alpha'\alpha$.
The relation $\equiv$ is obviously an equivalence relation.
Let $S$ denote the set of equivalence classes in  $R^+\backslash R^+_P$ for $\equiv$.
We can now rephrase the main result of \cite{AzBaSe}:

\begin{thm}[Azad-Barry-Seitz]
\label{th:abs}
  For any $s\in S$, $V_s:=\oplus_{\alpha\in s}\gothG_\alpha$ is an irreducible $L(\lambda)$-module.
In particular, $\bigsqcup_j R_j$ is the partition in equivalence classes for $\equiv$.
\end{thm}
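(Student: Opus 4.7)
The plan is to establish that every $V_s=\bigoplus_{\alpha\in s}\gothG_\alpha$ is both an $\Llam$-submodule of $T$ and irreducible; the second assertion of the theorem then follows because the $H$-weights of any $\Llam$-irreducible appearing in the canonical decomposition are pairwise connected by elements of $R_P$ and hence all lie in a single $\equiv$-class.

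Stability of $V_s$ is immediate. The Levi $\Llam$ is generated by $H$ and the root subgroups $U_\gamma$ for $\gamma\in R_P$; each $\gothG_\alpha$ is already $H$-stable, and the adjoint action of $U_\gamma$ sends $\gothG_\alpha$ into $\bigoplus_{k\geq 0}\gothG_{\alpha+k\gamma}$. Since $\gamma\in R_P$ has zero coefficient on $\Delta\setminus\Delta(P)$, every nonzero shift $\alpha+k\gamma$ that happens to lie in $R$ satisfies $\alpha+k\gamma\equiv\alpha$ and so belongs to $s$.

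For irreducibility, choose $\beta_0\in s$ maximal with respect to the partial order $\beta'\leq\beta\iff\beta-\beta'\in\ZZ_{\geq 0}\Delta(P)$, and fix a nonzero $v_{\beta_0}\in\gothG_{\beta_0}$. For any $\alpha\in\Delta(P)$, the weight $\beta_0+\alpha$ is either not a root (so $\gothG_{\beta_0+\alpha}=0$) or is a root; in the latter case $\beta_0+\alpha\in s$ by stability and strictly dominates $\beta_0$, contradicting maximality. Hence the positive root vectors of $\gothL_P$ annihilate $v_{\beta_0}$, so $v_{\beta_0}$ is a highest weight vector for $\gothL_P$, and the cyclic submodule $W:=U(\gothL_P)\cdot v_{\beta_0}\subseteq V_s$ is irreducible of highest weight $\beta_0$.

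The main obstacle is then to prove $W=V_s$. I would argue that for every $\beta\in s$ there exists a chain $\beta_0,\beta_0-\alpha_{i_1},\ldots,\beta_0-\alpha_{i_1}-\cdots-\alpha_{i_k}=\beta$ of genuine roots in $R$ with each $\alpha_{i_j}\in\Delta(P)$. Granting such a chain, iterated application of the negative root vectors $f_{\alpha_{i_j}}\in\gothL_P$ produces a nonzero element of $\gothG_\beta\cap W$; since $\dim\gothG_\beta=1$, this forces $\gothG_\beta\subseteq W$, and summing over $\beta\in s$ gives $V_s\subseteq W$. Existence of the chain is the heart of the argument; it can be proved by induction on the height of $\beta_0-\beta$ in $\ZZ_{\geq 0}\Delta(P)$, at each stage locating some $\alpha\in\Delta(P)$ with $\beta_0-\alpha\in R$ and $\beta_0-\alpha\geq\beta$, using $\mathfrak{sl}_2$-string arguments together with stability of $s$ under the reflections $s_\alpha$ for $\alpha\in\Delta(P)$.
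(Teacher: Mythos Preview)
The paper does not supply a proof of this statement: it is quoted as ``the main result of \cite{AzBaSe}'' and used as a black box to deduce Corollary~\ref{cor:abs}. So there is no paper-proof to compare against, and your attempt should be judged on its own.

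Your stability argument and the identification of a maximal $\beta_0\in s$ as an $\gothL_P$-highest weight are fine. The genuine gap is in the final step. You propose to reach every $\beta\in s$ from $\beta_0$ by a descending chain, and you set up an induction ``on the height of $\beta_0-\beta$ in $\ZZ_{\geq 0}\Delta(P)$''. But nothing you have written forces $\beta_0-\beta\in\ZZ_{\geq 0}\Delta(P)$: you only know $\beta_0-\beta\in\ZZ\Delta(P)$, and ``$\beta_0$ maximal'' merely says no element of $s$ lies strictly above it, not that every element lies below it. Concretely, if $s$ had two incomparable maximal elements $\beta_0,\beta_1$, your chain from $\beta_0$ could never reach $\beta_1$, and $V_s$ would have two independent highest weight vectors---exactly the failure of irreducibility you are trying to exclude. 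Ruling this out is the actual content of the Azad--Barry--Seitz theorem, and your hint (``$\mathfrak{sl}_2$-string arguments together with stability of $s$ under the reflections $s_\alpha$'') does not do it: those tools tell you each maximal $\beta_0$ is $W_P$-dominant (since $\beta_0+\alpha\notin R$ forces $\langle\beta_0,\alpha^\vee\rangle\geq 0$), but they do not by themselves prevent two distinct $W_P$-dominant roots from sharing the same $\Delta\backslash\Delta(P)$-part.

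In short, the reduction to ``unique maximal element in each $\equiv$-class'' is correct and standard, but that uniqueness is precisely the nontrivial combinatorial fact proved in \cite{AzBaSe}, and your sketch does not establish it.
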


An interesting consequence is the following corollary.

\begin{cor}
  \label{cor:abs}
For $\lambda$ generic such that $P=P(\lambda)$, the subspaces $T^i$ defined in Section~\ref{sec:defTi}
are irreducible $L(\lambda)$-modules.
\end{cor}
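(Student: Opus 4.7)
The plan is to combine the Azad-Barry-Seitz description of the irreducible $L(\lambda)$-submodules of $T$ with a genericity argument on $\lambda$, showing that the $\lambda$-weight spaces $T^i$ coincide exactly with the equivalence classes of roots under $\equiv$.

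First, I would unpack how $T^i$ sits inside the decomposition of Theorem~\ref{th:abs}. Since the action of $H$ on $T$ is multiplicity-free, each $T^i$ decomposes as a direct sum of root spaces: $T^i=\bigoplus_{\alpha\in R^i}\gothG_\alpha$, where $R^i:=\{\alpha\in R^+\setminus R_P^+\,:\,\alpha(\dot\lambda)=i\}$. By Theorem~\ref{th:abs}, the irreducible $L(\lambda)$-summands of $T$ are precisely the $V_s=\bigoplus_{\alpha\in s}\gothG_\alpha$ indexed by equivalence classes $s\in S$. Thus the statement I want to prove reduces to the purely combinatorial claim: for $\lambda$ generic with $P(\lambda)=P$, the partition $\{R^i\}_{i<0}$ of $R^+\setminus R_P^+$ by $\lambda$-weight agrees with the partition into $\equiv$-classes.

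Next, I would verify one inclusion for free: if $\alpha\equiv\alpha'$, then $\alpha-\alpha'\in\Span(\Delta(P))$, and every $\gamma\in\Delta(P)$ satisfies $\gamma(\dot\lambda)=0$ by definition of $\Delta(P)$, so $\alpha(\dot\lambda)=\alpha'(\dot\lambda)$. Hence every $\equiv$-class is contained in a single $R^i$, i.e.\ each $T^i$ is a sum of some $V_s$'s, and it suffices to rule out that two distinct classes share a $\lambda$-weight.

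For the converse, I would run the genericity argument. Fix any pair $\alpha,\alpha'\in R^+\setminus R_P^+$ with $\alpha\not\equiv\alpha'$, and write them as in \eqref{eq:beta}; the difference $\alpha(\dot\lambda)-\alpha'(\dot\lambda)=\sum_{\gamma\in\Delta\setminus\Delta(P)}(d_\gamma-d'_\gamma)\gamma(\dot\lambda)$ is a nonzero linear function of $\dot\lambda$ on the rational vector space of one-parameter subgroups of $\tilH$ because some coefficient $d_\gamma-d'_\gamma$ is nonzero. The cone of one-parameter subgroups with $P(\lambda)=P$ is an open cone in that space (it is a relative interior of a face of the cocharacter cone), and there are only finitely many pairs $(\alpha,\alpha')$ to consider. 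Removing the finite union of proper rational hyperplanes on which these linear forms vanish still leaves a dense set of $\lambda$ with $P(\lambda)=P$; for any such $\lambda$, distinct $\equiv$-classes have distinct $\lambda$-weights.

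The only real obstacle is making precise the genericity statement, i.e.\ ensuring that the conditions $P(\lambda)=P$ and "$\alpha(\dot\lambda)\neq\alpha'(\dot\lambda)$ for all $\alpha\not\equiv\alpha'$" can be simultaneously satisfied; once this is done, the remark following Proposition~\ref{Prop_Dim} already signals that such $\lambda$'s are exactly the right ones to consider, and the conclusion $T^i=V_s$ (for the unique $s$ with $\alpha(\dot\lambda)=i$ for $\alpha\in s$) is immediate from Theorem~\ref{th:abs}.
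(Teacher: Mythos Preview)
Your proposal is correct and follows essentially the same route as the paper's proof: both reduce the corollary to the combinatorial claim that, for generic $\lambda$ with $P(\lambda)=P$, two roots satisfy $\alpha(\dot\lambda)=\alpha'(\dot\lambda)$ if and only if $\alpha\equiv\alpha'$, and both establish the nontrivial direction by a hyperplane-avoidance genericity argument. The paper phrases the genericity through the neutral component $Z^\circ$ of the center of $L(\lambda)$ and explicitly records that the family $(\alpha|_{Z^\circ})_{\alpha\in\Delta\setminus\Delta(P)}$ is free---this is exactly the point implicit in your assertion that the linear form $\sum (d_\gamma-d'_\gamma)\gamma$ is nonzero on the span of the cone $\{P(\lambda)=P\}$, which is $Y(Z^\circ)\otimes\QQ$ rather than all of $Y(H)\otimes\QQ$.
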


\begin{proof}
\quad Consider  the center $Z$ of $L(\lambda)$ and its neutral component $Z^\circ$.
By the theorem, it is sufficient to prove that $\beta\equiv\beta'$ if and only if
$\langle\lambda,\beta\rangle=\langle\lambda,\beta'\rangle$.
There exists an open subset of $\lambda$ in $Y(Z^\circ)\otimes\QQ$  such that $P=P(\lambda)$.
So, for $\lambda$ generic, we have for all pairs $(\beta,\beta')\in R^2$,
$$\langle\lambda,\beta\rangle=\langle\lambda,\beta'\rangle\ \mbox{if and only if}\ \beta_{|Z^\circ}=\beta'_{|Z^\circ}.$$

\quad Under the action of $Z^\circ$, we have a decomposition
$$\gothG/\gothP=\oplus_{\chi\in X(Z^\circ)}V_\chi,$$ as sum of eigenspaces.
Since $Z^\circ$ is central in $L(\lambda)$, each $V_\chi$ is $L(\lambda)$-stable.  Note that $Z^\circ\subset Z\subset H$; and more precisely $$Z=\bigcap_{\alpha\in\Delta(P)}\Ker\alpha.$$
It follows that  the family $(\alpha_{|Z^\circ})_{\alpha\in\Delta\backslash\Delta(P)}$ is free.
For $\beta$ as in Equation~\ref{eq:beta}, we have
$\beta_{|Z^\circ}=\sum_{\alpha\in\Delta\backslash\Delta(P)}d_\alpha\alpha_{|Z^\circ}$.
We obtain that
$$\beta\equiv\beta'\iff\beta_{|Z^\circ}=\beta'_{|Z^\circ}.$$\end{proof}

\section{The main result}\label{section main}

\quad In this section we define the map $\phi_{\lambda}^{\odot}$ on cohomology and prove Theorem \ref{Thm1}.  This construction is analogous to the construction of the Belkale-Kumar product in \cite[Section 6]{BK06}.  For any $(u,v,w)\in (W^P)^3$ define the usual structure coefficients $d_{u,v}^w$ by the usual cohomology product

$$[\Lambda_u]\cdot[\Lambda_v]=\sum_{w\in W^P} d_{u,v}^w[\Lambda_w].$$
Similarly, we have structure coefficients $d_{\tilde u,\tilde v}^{\tilde w}$ for $H^*(\tilG/\tilPl)$.
 Let the symbol $\tau$ denote an indeterminant and consider the $\mathbb{Z}$-module
$H^*(G/\Plam)\otimes_{\mathbb{Z}} \mathbb{Z}[\tau].$  We define a product structure by  $$[\Lambda_u]\odot\bull[\Lambda_v]:=\sum_{w\in W^P} (\tau^{(\chi_w-\chi_u-\chi_v)(i(\dot\lambda))})\ d_{u,v}^w[\Lambda_w].$$  We extend this product $\mathbb{Z}[\tau]$-linearly to all of $H^*(G/\Plam)\otimes_{\mathbb{Z}} \mathbb{Z}[\tau]$.  We also define $\odot\bull$ on $H^*(\tilG/\tilPl)\otimes_{\mathbb{Z}} \mathbb{Z}[\tau]$ using the characters $\tilde\chi_{\tilde w}$ and replacing $i(\dot\lambda)$ by $\dot\lambda$.  By \cite[Proposition 17]{BK06}, this product structure is well defined, commutative and associative.
 We remark that the product $\odot\bull$ is very similar to the product $\odot$ defined in \cite{BK06} by Belkale and Kumar.
 The main difference is that $\odot\bull$ uses the single indeterminant $\tau$ where as $\odot$ uses several indeterminants, one for each simple root in $\Delta\backslash\Delta(P)$.

\begin{lemma}\label{lemma_bkprod}The product $[\Lambda_u]\odot\bull[\Lambda_v]\big|_{\tau=0}=[\Lambda_u]\odot_0[\Lambda_v]$ where $\odot_0$ denotes the Belkale-Kumar product.\end{lemma}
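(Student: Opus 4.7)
The plan is a direct term-by-term comparison of the two products. Unfolding the definition of $\odot\bull$ and setting $\tau = 0$ retains precisely those basis classes $[\Lambda_w]$, $w \in W^P$, for which the single scalar exponent $(\chi_w - \chi_u - \chi_v)(i(\dot\lambda))$ vanishes. The Belkale-Kumar product $\odot_0$, as constructed in \cite[Section 6]{BK06}, is the specialization at $\tau_{\alpha_i} = 0$ of a similar multivariate product indexed by simple roots $\alpha_i \in \Delta \setminus \Delta(P)$, with exponents $(\chi_w - \chi_u - \chi_v)(x_i)$, where $x_i \in \gothH$ is the fundamental coweight dual to $\alpha_i$. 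So $\odot_0$ keeps those $w$ for which every evaluation $(\chi_w - \chi_u - \chi_v)(x_i)$ vanishes. Since only $w$ with $d_{u,v}^w \neq 0$ can contribute to either side, the lemma reduces to showing that for such $w$ the two selection criteria coincide.

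The key input is a decomposition of $i(\dot\lambda)$. Because $\Plam = P(i \circ \lambda)$ by the setup at the start of Section~2, the definition of $P(\lambda)$ forces $\alpha_j(i(\dot\lambda)) = 0$ for $\alpha_j \in \Delta(P)$ and $\alpha_j(i(\dot\lambda)) > 0$ for $\alpha_j \in \Delta \setminus \Delta(P)$. Decomposing in the basis of $\gothH$ dual to the simple roots yields
$$i(\dot\lambda) \;=\; \sum_{\alpha_i \in \Delta \setminus \Delta(P)} c_i\, x_i \;+\; z, \qquad c_i > 0, \quad z \in Z(\gothG).$$
Since each $\chi_w$ is a sum of roots and therefore vanishes on $Z(\gothG)$,
$$(\chi_w - \chi_u - \chi_v)(i(\dot\lambda)) \;=\; \sum_{\alpha_i \in \Delta \setminus \Delta(P)} c_i\, (\chi_w - \chi_u - \chi_v)(x_i).$$

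The final step is to invoke the Belkale-Kumar numerical criterion \cite[Theorem 15]{BK06}: when $d_{u,v}^w \neq 0$, each individual evaluation $(\chi_w - \chi_u - \chi_v)(x_i)$ has a fixed sign. Since the $c_i$ are strictly positive and all summands share this common sign, the full sum vanishes if and only if every summand vanishes. This gives the desired equivalence of the two conditions and proves the lemma. The main subtlety is confirming the strict positivity of the $c_i$ without imposing any genericity on $\lambda$; this is immediate from the standing assumption $\Plam = P(\lambda)$, which places $\dot\lambda$ in the strict interior of the facet defining $P$.
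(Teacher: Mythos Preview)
Your proof is correct and follows the same approach as the paper, which simply asserts that by the definition of $\odot_0$ in \cite[Section~6]{BK06} it suffices to check $\alpha(i(\dot\lambda)) > 0$ for $\alpha \in \Delta \setminus \Delta(P)$ and $\alpha(i(\dot\lambda)) = 0$ for $\alpha \in \Delta(P)$, which is immediate from $P = P(\lambda)$. You have spelled out in full the reduction (the decomposition of $i(\dot\lambda)$ and the nonnegativity of the individual exponents) that the paper leaves implicit.
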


\begin{proof}By the definition of $\odot_0$ found in \cite[Section 6]{BK06}, it suffices to show that $\alpha(i(\dot\lambda))> 0$ for all $\alpha\in\Delta\backslash\Delta(P)$ and $\alpha(i(\dot\lambda))=0$ for all $\alpha\in\Delta(P)$.
 This is immediate from the definition of $P=\Plam$.\end{proof}

Recall that for any $(w,\tilde w)\in W^P\times \tilde W^P$ the structure coefficients $d_w^{\tilde w}$ of the map $\phi_{\lambda}^*$ are defined by expanding in the Schubert basis

$$\phi_{\lambda}^*([\Lambda_w])=\sum_{\tilde w\in\tilW^P}d_w^{\tilde w}[\Lambda_{\tilde w}].$$  We define the $\mathbb{Z}[\tau]$-linear map

$$\phi_{\lambda}^{\bullet}:H^*(G/\Plam)\otimes_{\mathbb{Z}} \mathbb{Z}[\tau]\rightarrow H^*(\tilG/\tilPl)\otimes_{\mathbb{Z}} \mathbb{Z}[\tau]$$ by

$$\phi_{\lambda}^{\bullet}([\Lambda_w]):=\sum_{\tilde w\in\tilW^P}(\tau^{\tilde\chi_{\tilde w}(\dot\lambda)-\chi_w(i(\dot\lambda))})\ d_w^{\tilde w}[\Lambda_{\tilde w}].$$

For the rest of this section, we will denote $\chi_w(i(\dot\lambda))$ by simply $\chi_w(\dot\lambda)$ when working with characters of $\gothH$.  By Proposition \ref{Prop_char}, $\phi_{\lambda}^{\bullet}$ is well defined since the value of $\tilde\chi_{\tilde w}(\dot\lambda)-\chi_w(\dot\lambda)\geq 0$ and integral for all $d_w^{\tilde w}\neq 0$.

\begin{prop}\label{prop_hmorph}The map $\phi_{\lambda}^{\bullet}$ is a ring homomorphism with respect to the product $\odot\bull$.\end{prop}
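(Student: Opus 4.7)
The plan is a direct bookkeeping argument: expand both sides in the Schubert basis, show the exponents of $\tau$ agree term by term, and then appeal to the fact that the ordinary comorphism $\phi_\lambda^*$ is already a ring homomorphism for the cup product.

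First I would compute $\phi_\lambda^\bullet([\Lambda_u]\odot\bull[\Lambda_v])$. Unwinding the definition of $\odot\bull$ and then applying $\phi_\lambda^\bullet$ gives
\[
\phi_\lambda^\bullet\bigl([\Lambda_u]\odot\bull[\Lambda_v]\bigr)
=\sum_{w\in W^P}\sum_{\tilde w\in\tilW^P}
\tau^{(\chi_w-\chi_u-\chi_v)(\dot\lambda)+\tilde\chi_{\tilde w}(\dot\lambda)-\chi_w(\dot\lambda)}
\,d_{u,v}^{w}\,d_{w}^{\tilde w}\,[\Lambda_{\tilde w}],
\]
and the $\chi_w$ terms in the exponent cancel, leaving $\tau^{(\tilde\chi_{\tilde w}-\chi_u-\chi_v)(\dot\lambda)}$.

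Next I would compute $\phi_\lambda^\bullet([\Lambda_u])\odot\bull\phi_\lambda^\bullet([\Lambda_v])$ the same way. Expanding each factor via $\phi_\lambda^\bullet$ and then multiplying with $\odot\bull$ on the target yields
\[
\phi_\lambda^\bullet([\Lambda_u])\odot\bull\phi_\lambda^\bullet([\Lambda_v])
=\sum_{\tilde u,\tilde v,\tilde w}
\tau^{(\tilde\chi_{\tilde u}-\chi_u)(\dot\lambda)+(\tilde\chi_{\tilde v}-\chi_v)(\dot\lambda)+(\tilde\chi_{\tilde w}-\tilde\chi_{\tilde u}-\tilde\chi_{\tilde v})(\dot\lambda)}
\,d_{u}^{\tilde u}\,d_{v}^{\tilde v}\,d_{\tilde u,\tilde v}^{\tilde w}\,[\Lambda_{\tilde w}],
\]
and again $\tilde\chi_{\tilde u}$ and $\tilde\chi_{\tilde v}$ telescope, leaving the same exponent $\tau^{(\tilde\chi_{\tilde w}-\chi_u-\chi_v)(\dot\lambda)}$ as on the other side. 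All exponents involved are nonnegative integers by Proposition \ref{Prop_char}, so nothing exotic happens on the $\tau$-side.

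Since the $\tau$-exponents match identically, checking the identity reduces to checking, for each fixed $\tilde w\in \tilW^P$, that
\[
\sum_{w\in W^P} d_{u,v}^{w}\,d_{w}^{\tilde w}
=\sum_{\tilde u,\tilde v\in\tilW^P} d_{u}^{\tilde u}\,d_{v}^{\tilde v}\,d_{\tilde u,\tilde v}^{\tilde w}.
\]
But this is precisely the equality of coefficients in $\phi_\lambda^*([\Lambda_u]\cdot[\Lambda_v])=\phi_\lambda^*([\Lambda_u])\cdot\phi_\lambda^*([\Lambda_v])$, which holds because $\phi_\lambda^*$ is the pullback in cohomology along the morphism $\phi_\lambda:\tilG/\tilPl\hookrightarrow G/\Plam$ of smooth varieties and is therefore a ring homomorphism for the ordinary cup product. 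This concludes the argument. There is no real obstacle here; the content of the proposition is that the exponents $\tilde\chi_{\tilde w}(\dot\lambda)-\chi_w(\dot\lambda)$ in the definition of $\phi_\lambda^\bullet$ and $(\chi_w-\chi_u-\chi_v)(\dot\lambda)$ in the definition of $\odot\bull$ were chosen so as to telescope compatibly on both sides.
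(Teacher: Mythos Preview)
Your proof is correct and follows essentially the same approach as the paper: both expand each side in the Schubert basis, observe that the $\tau$-exponents telescope to the common value $(\tilde\chi_{\tilde w}-\chi_u-\chi_v)(\dot\lambda)$, and then reduce to the coefficient identity $\sum_w d_{u,v}^{w}d_{w}^{\tilde w}=\sum_{\tilde u,\tilde v} d_{u}^{\tilde u}d_{v}^{\tilde v}d_{\tilde u,\tilde v}^{\tilde w}$, which is precisely the fact that $\phi_\lambda^*$ is a ring homomorphism for the cup product. One small remark: your appeal to Proposition~\ref{Prop_char} only covers the exponents coming from $\phi_\lambda^\bullet$; the nonnegativity of the $\odot\bull$ exponents $(\chi_w-\chi_u-\chi_v)(\dot\lambda)$ is a separate fact (cited in the paper from \cite[Proposition~17]{BK06}), though this does not affect the argument.
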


\begin{proof}Consider the following calculations:

\begin{eqnarray*}\phi_{\lambda}^{\bullet}([\Lambda_u]\odot\bull[\Lambda_v])&=&\phi_{\lambda}^{\bullet}\left(\sum_{w\in W^P} (\tau^{(\chi_w-\chi_u-\chi_v)(\dot\lambda)})\ d_{u,v}^w[\Lambda_w]\right)\\
&=&\sum_{w\in W^P} (\tau^{(\chi_w-\chi_u-\chi_v)(\dot\lambda)})\ d_{u,v}^w\phi_{\lambda}^{\bullet}([\Lambda_w])\\
&=&\sum_{w\in W^P} (\tau^{(\chi_w-\chi_u-\chi_v)(\dot\lambda)})\ d_{u,v}^w\sum_{\tilde w\in\tilW^P}(\tau^{\tilde\chi_{\tilde w}(\dot\lambda)-\chi_w(\dot\lambda)})\ d_w^{\tilde w}[\Lambda_{\tilde w}]\\
&=&\sum_{(w,\tilde w)\in W^P\times\tilW^P}(\tau^{\tilde\chi_{\tilde w}(\dot\lambda)-(\chi_u+\chi_v)(\dot\lambda)})\ d_{u,v}^w d_w^{\tilde w}[\Lambda_{\tilde w}]
\end{eqnarray*}

and

\begin{eqnarray*}\phi_{\lambda}^{\bullet}([\Lambda_u])\odot\bull\phi_{\lambda}^{\bullet}([\Lambda_v])&=&\left(\sum_{\tilde u\in\tilW^P}(\tau^{\tilde\chi_{\tilde u}(\dot\lambda)-\chi_u(\dot\lambda)})\ d_u^{\tilde u}[\Lambda_{\tilde u}]\right)\odot\bull\left(\sum_{\tilde v\in\tilW^P}(\tau^{\tilde\chi_{\tilde v}(\dot\lambda)-\chi_v(\dot\lambda)})\ d_v^{\tilde v}[\Lambda_{\tilde v}]\right)\\
&=& \sum_{(\tilde u,\tilde v)\in(\tilW^P)^2}(\tau^{(\tilde\chi_{\tilde v}+\tilde\chi_{\tilde u})(\dot\lambda)-(\chi_v+\chi_u)(\dot\lambda)})\ d_u^{\tilde u}d_v^{\tilde v}\sum_{\tilde w\in \tilW^P} (\tau^{(\tilde\chi_w-\tilde\chi_u-\tilde\chi_v)(\dot\lambda)})\ d_{\tilde u,\tilde v}^{\tilde w}[\Lambda_{\tilde w}] \\
&=& \sum_{(\tilde u,\tilde v,\tilde w)\in(\tilW^P)^3}(\tau^{\tilde\chi_{\tilde w}(\dot\lambda)-(\chi_u+\chi_v)(\dot\lambda)})\ d_u^{\tilde u}d_v^{\tilde v}d_{\tilde u,\tilde v}^{\tilde w}[\Lambda_{\tilde w}].\end{eqnarray*}

The proposition follows from the fact that $\phi_{\lambda}^*$ is a ring homomorphism.\end{proof}

\subsection*{Definition of $\phi_{\lambda}^{\odot}$ and proof of Theorem \ref{Thm1}}

We define the map $$\phi_{\lambda}^{\odot}:=\phi_{\lambda}^{\bullet}|_{\tau=0}.$$  Clearly this gives a map $$\phi_{\lambda}^{\odot}:H^*(G/\Plam)\rightarrow H^*(\tilG/\tilPl)$$ since the indeterminant vanishes in $(H^*(\tilG/\tilPl)\otimes_{\mathbb{Z}} \mathbb{Z}[\tau],\odot\bull)$.  Define the structure constants $c_w^{\tilde w}$ by expanding with respect to the Schubert basis $$\phi_{\lambda}^{\odot}([\Lambda_w])=\sum_{\tilde w\in\tilW^P}c_w^{\tilde w}[\Lambda_{\tilde w}].$$  By the definition of $\phi_{\lambda}^{\bull}$, we have that $c_w^{\tilde w}=d_w^{\tilde w}$ when $\tilde\chi_{\tilde w}(\dot\lambda)-\chi_w(\dot\lambda)=0$ and $c_w^{\tilde w}=0$ otherwise.
 Moreover, by Lemma \ref{lemma_bkprod} and Proposition \ref{prop_hmorph}, $\phi_{\lambda}^{\odot}$ is a ring homomorphism with respect to the Belkale-Kumar product $\odot_0$ on cohomology.
 \hfill$\Box$

\bigskip

\quad Observe that $c_w^{\tilde w}\neq 0$ if and only if $(w,\tilde w^{\vee})$ is Levi-movable with respect to $\phi_{\lambda}$.  Also if $\phi_{\lambda}$ is the diagonal embedding, we have that $$\phi_{\lambda}^{\odot}([\Lambda_{\tilde u}\times\Lambda_{\tilde v}])=[\Lambda_{\tilde u}]\odot_0[\Lambda_{\tilde v}].$$  The following lemma considers cominuscule flag varieties and is a generalization of \cite[Lemma 19]{BK06}.

\begin{lemma}If $G/P(\lambda)$ is cominuscule, then $\phi_{\lambda}^*$ and $\phi_{\lambda}^{\odot}$ coincide.\end{lemma}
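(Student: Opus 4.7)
The plan is to verify $c_w^{\tilde w} = d_w^{\tilde w}$ for every $(w,\tilde w) \in W^P \times \tilW^P$. By the construction of $\phi_\lambda^\odot$ this reduces to showing that whenever $d_w^{\tilde w} \neq 0$ one has $\tilde\chi_{\tilde w}(\dot\lambda) = \chi_w(i(\dot\lambda))$; equivalently, by Proposition~\ref{Prop_char}, $(w,\tilde w^\vee)$ is Levi-movable with respect to $\phi_\lambda$.

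We first exploit the cominuscule hypothesis to evaluate $\chi_w(i(\dot\lambda))$ explicitly. Since $G/\Plam$ is cominuscule, $\Delta \setminus \Delta(P)$ is a singleton $\{\alpha_0\}$ and every $\beta \in R^+ \setminus R_P^+$ has coefficient exactly one on $\alpha_0$. Together with the centrality of $i\circ\lambda$ in $\Llam$ (which forces $\alpha(i(\dot\lambda))=0$ for all $\alpha \in \Delta(P)$), this gives $\beta(i(\dot\lambda)) = \alpha_0(i(\dot\lambda))$ for every $\beta \in R^+ \setminus R_P^+$. Since $\chi_w$ is the sum of the $\codim(\Lambda_w)$ roots in $(R^+\setminus R_P^+)\cap w^{-1}R^+$, this yields
\[
\chi_w(i(\dot\lambda)) = \codim(\Lambda_w)\cdot \alpha_0(i(\dot\lambda)).
\]

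Next we aim for the parallel formula on the $\tilG$-side. The crucial input is that $\tilG \hookrightarrow G$ is a closed embedding with $\tilPl = \tilG \cap \Plam$, producing an injective and $\lambda$-equivariant map $\tilT = \tgothG/\tgothP \hookrightarrow \gothG/\gothP = T$. In the cominuscule case $T$ sits entirely in the single $\lambda$-weight $-\alpha_0(i(\dot\lambda))$, and by equivariance so does $\tilT$. Translating back to roots, every $\tilde\beta \in \tilR^+ \setminus \tilR_P^+$ satisfies $\tilde\beta(\dot\lambda) = \alpha_0(i(\dot\lambda))$, whence
\[
\tilde\chi_{\tilde w}(\dot\lambda) = \codim(\tilde\Lambda_{\tilde w})\cdot \alpha_0(i(\dot\lambda)).
\]

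To conclude, observe that $d_w^{\tilde w} \neq 0$ forces the cohomological degree condition $\codim(\Lambda_w) = \codim(\tilde\Lambda_{\tilde w})$, so the two displayed formulas agree and $\chi_w(i(\dot\lambda)) = \tilde\chi_{\tilde w}(\dot\lambda)$, as required. The main obstacle will be the verification that $\tilT$ inherits from $T$ the property of being concentrated in a single $\lambda$-weight; once this is in hand, the remainder is cominuscule bookkeeping directly generalizing \cite[Lemma~19]{BK06} from the diagonal case.
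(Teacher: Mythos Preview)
Your argument is correct, but it proceeds along a different line than the paper's. You invoke the numerical criterion of Proposition~\ref{Prop_char} and verify the character identity $\chi_w(i(\dot\lambda))=\tilde\chi_{\tilde w}(\dot\lambda)$ by exploiting that, in the cominuscule case, every root of $R^+\setminus R_P^+$ pairs with $i(\dot\lambda)$ to the common value $\alpha_0(i(\dot\lambda))$, and then transporting this single-weight property to $\tilT$ via the $\lambda$-equivariant inclusion $\tilT\hookrightarrow T$; the codimension equality forced by $d_w^{\tilde w}\neq 0$ finishes it. This is essentially the direct generalization of the computation in \cite[Lemma~19]{BK06}. The paper instead argues geometrically, bypassing Proposition~\ref{Prop_char} altogether: starting from $(p,\tilde p)\in\Plam\times\tilPl$ realizing the transversality~\eqref{eqn Tangent}, it normalizes $\tilde p=e$, writes $p=lu$ with $u$ in the unipotent radical $U(\lambda)$, and observes that cominusculeness makes $U(\lambda)$ abelian, hence $U(\lambda)$ acts trivially on $T=\gothG/\gothP$ and $pT_w=lT_w$; Levi-movability is then immediate. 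Your route is combinatorially transparent and close to the original Belkale--Kumar argument, while the paper's route is shorter and explains geometrically why the Levi translate suffices. Note also that the ``main obstacle'' you flag---that $\tilT$ sits in a single $\lambda$-weight---is in fact immediate from the $\lambda$-equivariance of $\tilT\hookrightarrow T$, so no further work is needed there.
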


\begin{proof}
Let $w\in W^P$ and $\tilde w\in{\tilde W}^P$.
With notation in Theorem \ref{Thm1}, it is sufficient to prove that if $d_w^{\tilde w}\neq 0$, then $(w,\tilde w)$ is Levi-movable.
Since $d_w^{\tilde w}\neq 0$, there exists $(p,\tilde p)\in P(\lambda)\times\tilde P(\lambda)$ such that the natural map
$$\tilde T\to\frac{T}{pT_w}\oplus\frac{\tilde T}{\tilde p\tilde T_{{\tilde w}^\vee}} $$
is an isomorphism.
 Multiplying $(p,\tilde p)$ by $(\tilde p,{\tilde p}^{-1})$, we may assume that $\tilde p=e$.
 Let us write $p=lu$, with $l\in L(\lambda)$ and $ u$ in the unipotent radical $U(\lambda)$ of $P(\lambda)$.
 Since $G/P(\lambda)$ is cominuscule, $U(\lambda)$ is abelian and $pT_w=lT_w$.
It follows that $(w,\tilde w)$ is Levi-movable.
\end{proof}

\quad The next lemma relates the comorphism $\phi_{\lambda}^{\odot}$ to recent formulas for decomposing structure constants.  The proof is an immediate consequence of \cite[Theorems 1.6 and 1.8]{Ri09}.

\begin{lemma}Multiplicative formulas for decomposing structure constants found in \cite{Re208} and \cite{Ri09} apply to all structure constants associated to the comorphism $\phi_{\lambda}^{\odot}.$\end{lemma}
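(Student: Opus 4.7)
The plan is to reduce the lemma to the Levi-movability characterization that has already been established. First, I would recall from the proof of Theorem~\ref{Thm1} (combined with Proposition~\ref{Prop_char}) that a structure constant $c_w^{\tilde w}$ of $\phi_{\lambda}^{\odot}$ is nonzero precisely when $\tilde\chi_{\tilde w}(\dot\lambda)-\chi_w(i(\dot\lambda))=0$, which by Proposition~\ref{Prop_char} is equivalent to $(w,\tilde w^{\vee})$ being Levi-movable with respect to $\phi_{\lambda}$; furthermore in that case $c_w^{\tilde w}=d_w^{\tilde w}$. Hence every nonzero structure constant of $\phi_{\lambda}^{\odot}$ coincides with the corresponding branching coefficient of $\phi_{\lambda}^{*}$ and arises from a Levi-movable pair.

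Next, I would invoke \cite[Theorems 1.6 and 1.8]{Ri09}. Those theorems provide multiplicative decomposition formulas for the branching coefficients $d_w^{\tilde w}$, but only under the hypothesis that the data $(w,\tilde w^{\vee})$ satisfies the generalized Levi-movability condition relative to $\phi_{\lambda}$ (this being precisely the geometric hypothesis under which the formulas are derived in loc.~cit., and likewise for the formulas in \cite{Re208}). Since every coefficient $c_w^{\tilde w}$ we care about is either zero or is one of these Levi-movable $d_w^{\tilde w}$'s, the hypothesis of the cited theorems is automatically met, and the formulas apply verbatim. For the zero coefficients there is nothing to prove.

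The only step requiring genuine attention, and the main potential obstacle, is checking that the formulation of Levi-movability used in \cite{Ri09} (and the analogous hypotheses appearing in the factorization theorems of \cite{Re208}) coincides with the generalized notion recalled in the present paper. Once this dictionary is in place, the lemma follows with no further computation.
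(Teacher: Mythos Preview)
Your proposal is correct and follows the same route as the paper: the paper's entire proof is the remark that the lemma ``is an immediate consequence of \cite[Theorems 1.6 and 1.8]{Ri09}.'' You have simply made explicit the mechanism behind that citation, namely that the nonzero $c_w^{\tilde w}$ are exactly the Levi-movable $d_w^{\tilde w}$, which is precisely the hypothesis under which those theorems apply.
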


\section{Examples}

\quad Examples 4.1 and 4.2 require a basic observation on the map $\phi^*_{\lambda}$ restricted to $H^2$.  We remark that this same technique is used by Berenstein and Sjamaar in \cite{BS00}.  Let $\Omega(G)\subseteq\gothH^*$ denote the weight lattice of $G$.
 By \cite{BGG73, De74}, we have that $\Omega(G)$ is isomorphic to $H^2(G/B)$ by mapping $\mu\mapsto c_1(\mathcal{L}_{\mu})$ where $c_1(\mathcal{L}_{\mu})$ is the first Chern class of the line bundle $\mathcal{L}_{\mu}$ with weight $\mu$.
 To any simple root $\alpha_k\in\Delta$, we let $s_k\in W$ denote the corresponding simple reflection and $\pi_k\in \Omega(G)$ denote the corresponding fundamental weight.
 Under the above isomorphism, we have that $\pi_k\mapsto[\Lambda_{s_k^{\vee}}]$.
 Consider the commutative diagram

\begin{equation}\label{lattice eq}\xymatrix{\Omega(G)\ \ar[r]^-{i^*} \ar[d]&\ \Omega(\tilG) \ar[d] \\
H^2(G/B)\ \ar[r]^-{\phi_{\lambda}^2} &\ H^2(\tilG/\tilB)}\end{equation}

where $i^*$ is the induced map from the inclusion $i:\tilde\gothH\hookrightarrow\gothH$.
 It is easy to see that computing $\phi^*_{\lambda}([\Lambda_{s_k^{\vee}}])$ is equivalent to computing $i^*(\pi_k)$.

\subsection{Principal $\SL(\mathbb{C}^2)$ embeddings}
Let $\tilG=\SL(\mathbb{C}^2)$ and $G=\SL(V_n)$ where $V_n$ is the irreducible representation of $\tilG$
associated to the integral weight $n\in\mathbb{Z}_+$ that is of dimension $n+1$.  We remark that the example of $\SL(\mathbb{C}^2)$ embeddings has been studied in \cite[Section 5.3]{BS00}.  Choose the one parameter subgroup $$\lambda(t):=\diag(t,t^{-1})\subseteq\tilG.$$
With respect to the morphism $i:\tilG\rightarrow G$, we have that
$$i\circ\lambda(t)=\diag(t^n,t^{n-2},\ldots,t^{2-n},t^{-n})\subseteq G.$$
Note that, if $n$ is even, $i$ is not injective and one should replace $\SL(\mathbb{C}^2)$ by
${\rm PSL}(\mathbb{C}^2)$.
Here, $\tilPl$ and $\Plam$ are Borel subgroups and hence $\tilG/\tilPl$ is the complex projective line
and $G/\Plam$ is the complete flag variety on $\mathbb{C}^{n+1}$.
 To compute the map

$$\phi_{\lambda}^*:H^*(G/\Plam)\rightarrow H^*(\tilG/\tilPl),$$

we only need to determine $\phi_{\lambda}^*$ restricted to $H^0$ and $H^2$ since $\phi_{\lambda}^*\equiv 0$ on $H^p$ for $p\geq 3$.
 We have that $$H^0(G/\Plam)=\mathbb{Z}[\Lambda_{w_0}]\quad \mbox{and}\quad H^2(G/\Plam)=\bigoplus_{k=1}^n\mathbb{Z}[\Lambda_{s_k^{\vee}}]$$ where $s_1,\ldots s_n$ denote the simple generators of $W$.
 Clearly $$\phi_{\lambda}^*([\Lambda_{w_0}])=[\Lambda_{\tilde w_0}]$$ and using \eqref{lattice eq}, we have that $$\phi^*_{\lambda}([\Lambda_{s_k^{\vee}}])=m_k[\Lambda_{\tilde 1}]$$ where $$m_k:=\sum_{i=1}^k n-2i.$$

Note that for any $k$, the value $m_k=m_{n+1-k}$ and that $m_1=m_{n}=n$.
 We also remark that the sum $\sum_{k=1}^{n}m_k$ is equal to the Dynkin index of the representation $V_n$.

\bigskip

To compute $\phi_{\lambda}^{\odot}$ we determine for which $(w,\tilde w)\in\{(w_0,\tilde w_0),(w_{s_1^{\vee}},1),\ldots,(w_{s_n^{\vee}},1)\}$ we have $$i^*(\chi_w)(\dot\lambda)=\tilde\chi_{\tilde w}(\dot\lambda).$$  Note that $\chi_{w_0}\equiv 0$ and $\tilde\chi_{\tilde w_0}\equiv 0$ since $R^+\cap w_0R^+=\tilR^+\cap \tilde w_0\tilR^+=\emptyset.$  Let $\{x_1,\ldots,x_n\}\in\gothH$ and $\{\tilde x_1\}\in\tilde\gothH$ denote the dual basis to the simple roots $\Delta$ and $\tilde\Delta$ respectively.
 For the pairs  $(w_{s_k^{\vee}},1)$, we observe that $\displaystyle i(\dot\lambda)=2\sum_{i=1}^n x_i\in\gothH$ and $\dot\lambda=2\tilde x_1\in\tilde\gothH$.
 Thus, $$i^*(\chi_{s_k^{\vee}})(\dot\lambda)=2\alpha_{k}\left(\sum_{i=1}^n x_i\right)=2\quad \mbox{and} \quad \tilde\chi_{1}(\dot\lambda)=2\tilde\alpha_1(\tilde x_1)=2$$ since $R^+\cap s_k^{\vee}R^+=\{\alpha_k\}$.
 Hence we have $\phi_{\lambda}^{\odot}=\phi_{\lambda}^*$.
 Note that $G/\Plam$ is not a cominuscule flag variety in this case.

\subsection{Tensor embedding}

Fix an integer $n>0$ and let $\tilG=\SL(\mathbb{C}^n)\times\SL(\mathbb{C}^n)$ and $G=\SL(\mathbb{C}^n\otimes\mathbb{C}^n)$ with the embedding $i:\tilG\hookrightarrow G$ given by the natural action of $\tilG$ on $\mathbb{C}^n\otimes\mathbb{C}^n$.
 Fix integers $k,l< n$ and let $\bar k:=n-k$ and $\bar l:=n-l$.
 Define the one parameter subgroup

$$\lambda(t):=
\diag(\underbrace{t^{\bar k},\ldots,t^{\bar k}}_k,\underbrace{t^{-k},\ldots,t^{-k}}_{\bar k})\times
\diag(\underbrace{t^{\bar l},\ldots,t^{\bar l}}_l,\underbrace{t^{-l},\ldots,t^{-l}}_{\bar l})
\subseteq \tilG.$$

Then
$$i\circ\lambda(t)=\diag(\underbrace{t^{\bar k+\bar l},\ldots,t^{\bar k+\bar l}}_{kl},\underbrace{t^{\bar k-l},\ldots,t^{\bar k-l}}_{\bar kl+k\bar l},\underbrace{t^{-(k+l)},\ldots,t^{-(k+l)}}_{\bar k\bar l})\subseteq G.$$

Here we have that $\tilG/\tilPl$ is the product of Grassmannians $\Gr(k,\mathbb{C}^n)\times\Gr(l,\mathbb{C}^n)$ and $G/\Plam$ is the two-step flag variety $\Fl(kl, n^2-\bar k\bar l;\mathbb{C}^n\otimes\mathbb{C}^n).$  In general the map $\phi^*_{\lambda}$ is quite difficult to explicitly compute.
 We will compute  $\phi^*_{\lambda}$ restricted to $H^2$.
 With respect to the Schubert basis, we have that

$$H^2(G/\Plam)=\mathbb{Z}[\Lambda_{s_{kl}^{\vee}}]\oplus\mathbb{Z}[\Lambda_{s_{n^2-\bar k\bar l}^{\vee}}]\simeq \mathbb{Z}^2$$

where $s_{kl}, s_{n^2-\bar k\bar l}$ denote the simple reflections in $W^P$ and

$$H^2(\tilG/\tilPl)=\mathbb{Z}[\Lambda_{\tilde w_1}]\oplus\mathbb{Z}[\Lambda_{\tilde w_2}]\simeq \mathbb{Z}^2$$

where $$\Lambda_{\tilde w_1}:=\Lambda_{\tilde s_k^{\vee}}\times\Gr(l,\mathbb{C}^n)\quad\mbox{and}\quad \Lambda_{\tilde w_2}:=\Gr(k,\mathbb{C}^n)\times\Lambda_{\tilde s_l^{\vee}}.$$  Using \eqref{lattice eq}, we find that

$$\phi_{\lambda}^*([\Lambda_{s_{kl}^{\vee}}])=l[\Lambda_{\tilde w_1^{\vee}}]+k[\Lambda_{\tilde w_2^{\vee}}]$$

and $$\phi_{\lambda}^*([\Lambda_{s_{n^2-\bar k\bar l}^{\vee}}])=
\bar l[\Lambda_{\tilde w_1^{\vee}}]+\bar k[\Lambda_{\tilde w_2^{\vee}}].$$

Let $\{x_1,\ldots, x_{n^2-1}\}$ and $\{\tilde x_1,\ldots\tilde x_{n-1},\tilde x_1',\ldots,\tilde x_{n-1}'\}$ denote the dual basis to $\Delta$ and $\tilde\Delta$ respectively.
Writing $\dot\lambda$ in terms of this basis gives
$$\dot\lambda=n(\tilde x_k+\tilde x_l')\quad \mbox{and}\quad i(\dot\lambda)=
n(x_{kl}+x_{n^2-\bar k\bar l}).$$
Computing the characters $\chi$ gives,
$$\tilde\chi_{w_1}=\tilde\alpha_k,\quad \tilde\chi_{w_2}=\tilde\alpha_l'$$
and
$$\chi_{s_{kl}}=\alpha_{kl},\quad \chi_{s_{n^2-\bar k\bar l}}=\alpha_{n^2-\bar k\bar l}.$$

Hence
$$\tilde\chi_{w_1}(\dot\lambda)=\tilde\chi_{w_2}(\dot\lambda)=
i^*(\chi_{s_{kl}})i(\dot\lambda)=i^*(\chi_{s_{n^2-\bar k\bar l}})(\dot\lambda)=n.$$
Thus $\phi_{\lambda}^{\odot}=\phi_{\lambda}^*$ restricted to $H^2$.

\subsection{Odd orthogonal embedding} Fix a positive integer $m$ and let $n=2m+1.$  Let $\tilG=\SO(\mathbb{C}^n)$ denote the special orthogonal group on $\mathbb{C}^n$ with respect to the quadratic form
$$
Q(\sum t_ie_i):=t_{m+1}^2+\sum_{i=1}^m t_it_{2m+2-i}
$$
where $\{e_1,\ldots,e_n\}$ is the standard basis of $\mathbb{C}^n$.
Let $G=\SL(\mathbb{C}^n)$ and let $i:\tilG\hookrightarrow G$ be the natural embedding of groups.
Fix an integer $k\leq m$ and define the one parameter subgroup
$$\lambda(t):=\diag(\underbrace{t,\ldots,t}_k,\underbrace{0,\ldots,0}_{n-2k},\underbrace{t^{-1},\ldots,t^{-1}}_{k})
\subseteq \tilG.$$
It is easy to see that $i\circ\lambda(t)\subseteq G$ has the same presentation as $\lambda(t)$ above.  Here we have that $\tilG/\tilPl$ is the orthogonal Grassmannian $\OG(k,\mathbb{C}^n)$ of isotropic $k$-planes in $\mathbb{C}^n$ with respect to $Q$ and $G/\Plam$ is equal to the two step flag variety $\Fl(k,n-k;\mathbb{C}^n)$.  The embedding $\phi_{\lambda}$ is given by $\phi_{\lambda}(V)=(V,V^{\perp})$ where $V$ is an isotropic $k$-plane in $\mathbb{C}^n$ and $V^{\perp}$ denotes the orthogonal complement of $V$ in $\mathbb{C}^n$.  While $\phi_{\lambda}^*$ is very difficult to determine in general, we can compute $\phi_{\lambda}^*([\Lambda_w])$ for certain $w\in W^P$.  Consider the diagram

\begin{equation}\label{ortho diagram}\xymatrix{\OG(k,\mathbb{C}^n)\ \ar[r]^-{\phi_{\lambda}} \ar[dr]_-{\psi_1}&\ \Fl(k,n-k,\mathbb{C}^n) \ar[d]^-{\psi_2} \\
  &\ \ \Gr(k,\mathbb{C}^n)}\end{equation}

where $\psi_1$ is the natural inclusion of $\OG(k,\mathbb{C}^n)$ in $\Gr(k,\mathbb{C}^n)$ and $\psi_2$ is the natural projection of $\Fl(k,n-k,\mathbb{C}^n)$ onto $\Gr(k,\mathbb{C}^n)$.  In \cite{Co09}, Coskun gives a branching algorithm which determines the map $\psi_1^*$ on cohomology with respect to the Schubert basis.  By the commutivity of diagram (\ref{ortho diagram}), we can compute $\phi^*_{\lambda}([\Lambda_w])$ for any Schubert class that can be written as $[\Lambda_w]=\psi_2^*([\Lambda_{w'}])$ for some Schubert class $[\Lambda_{w'}]\in H^*(\Gr(k,n)).$

\smallskip

\quad For the following example, we adopt the notation found in \cite[Chapter 3]{BiLa00}.  Let $n=9$ and $k=3$.  In this case, we can identify the Weyl group $W$ with the symmetric group $S_9$ and $\tilW$ can be identified with the subgroup of $S_9$ given by
$$\tilW=\{(a_1\cdots a_9)\in S_9\ |\ a_i+a_{10-i}=10\}.$$  Let $w=(468579123)\in W^P$.  Then $[\Lambda_w]=\psi_2^*([\Lambda_{w'}])$ where $w'=(468123579)$.  Hence $$\phi^*_{\lambda}([\Lambda_w])=\phi^*_{\lambda}\circ\psi_2^*([\Lambda_{w'}])=\psi_1^*([\Lambda_{w'}]).$$  By \cite[Example 4.4]{Co09}, we have that

$$\psi_1^*([\Lambda_{w'}])=4[\Lambda_{\tilde w_1}]+2[\Lambda_{\tilde w_2}]+2[\Lambda_{\tilde w_3}]$$ where $$\tilde w_1=(348159267),\quad \tilde w_2=(168357249),\quad \tilde w_3=(267159348).$$

Let $(x_1,\ldots x_8)$ and $(\tilde x_1,\tilde x_2,\tilde x_3)$ denote the dual basis to $\Delta$ and $\tilde\Delta$ respectively.  Writing $\dot\lambda$ in terms of this basis gives
$$\dot\lambda=\tilde x_3\quad \mbox{and}\quad i(\dot\lambda)= x_3+x_6.$$  By \cite[Theorem 1(i)]{Ri08} and the definition of $\chi_w$ we have that $$i^*(\chi_w)(\dot\lambda)=\chi_w(x_3)+\chi_w(x_6)=6.$$ and by an odd orthogonal analogue of \cite[Lemma 50]{BK07} we have that  $$\tilde\chi_{\tilde w_1}(\dot\lambda)=\tilde\chi_{\tilde w_2}(\dot\lambda)=\tilde\chi_{\tilde w_3}(\dot\lambda)=9.$$

Hence $\phi_{\lambda}^{\odot}([\Lambda_w])=0$.  Observe that in this case $\phi_{\lambda}^{\odot}\neq \phi_{\lambda}^*.$

\section{Application to Eigencones}\label{section eigen}

\quad In this section, we make the assumption that no ideal of $\tgothG$ is an ideal of $\gothG$.

\bigskip

\quad Let $\Chi(H)$ denote the group of characters of $H$ and set $\Chi(H)_\QQ:=\Chi(H)\otimes_\ZZ\QQ$.  If $\nu\in \Chi(H)$ is dominant, we will denote by $V_\nu$ the irreducible representation of highest weight $\nu$. We will use similar notation for $\tG$.

\smallskip

\quad We denote by $\lr(\tG,G)$ 
the cone of the pairs $(\tnu,\nu)\in \Chi(\tH)_\QQ\times\Chi(H)_\QQ$
such that $n\tnu$ and $n\nu$ are dominant 
weights and $V_{n\tnu}\otimes V_{n\nu}$ contains nonzero $\tG$-invariant vectors for some positive integer $n$.
The set $\lr(\tG,G)$ is a closed convex rational polyhedral cone contained in the dominant chamber
$\Chi(\tH)_\QQ^+\times\Chi(H)_\QQ^+$.
Moreover, by \cite[Proposition~]{Re07}, our assumption implies that $\lr^\circ(\tG,G)$ has
non empty interior.  The aim of this section is to describe 
$\lr^\circ(\tG,G)$ as a part of $\Chi(\tH)_\QQ^+\times\Chi(H)_\QQ^+$ by a minimal list of inequalities.
We first introduce some notation.

\smallskip

\quad Let $\Wt_\tH(\gothG/\tgothG)$ be the set of the nontrivial weights for the $\tH$-action on $\gothG$.
Let $\Chi(\tH)\otimes \QQ$ denote the rational vector space  generated the characters of $\tH$.
We consider the set of hyperplanes ${\mathcal H}$ of $\Chi(\tH)\otimes \QQ$ spanned by elements of $\Wt_\tH(\gothG/\tgothG)$.
For each such hyperplane $h\in{\mathcal H}$ there exists exactly two opposite indivisible one parameter subgroups
$\pm\lambda_{h}$ which are orthogonal (for the parring $\langle\cdot,\cdot\rangle$) to $h$.
These one parameter subgroups of $\tH$ form a set stable by the action of $\tW$. Let $\{\lambda_1,\,\cdots,\lambda_n\}$ be the set of dominant one parameter subgroups obtained from the hyperplanes in ${\mathcal H}$.

\begin{thm}
\label{th:param2}
  We assume that no ideal of $\tgothG$ is an ideal of $\gothG$.

\smallskip

A point $(\tnu,\nu)\in \Chi(\tH)_\QQ^+\times\Chi(H)_\QQ^+$ belongs to  $\lr(\tG,G)$ if and only if
for all $i=1,\cdots, n$
and for all pair of Schubert classes $([\tLambda_\tw],\,[\Lambda_w])$ of $\tG/\tP(\lambda_i)$ and
$G/P(\lambda_i)$ associated to  $(\tw,w)\in \tW^{\tP(\lambda_i)}\times W^{P(\lambda_i)}$ such that

\begin{eqnarray}
\phi_{\lambda_i}^\odot([\Lambda_w])\odot_0[\tLambda_\tw]=[{\tLambda_e}]\in {\rm H}^*(\tG/\tP(\lambda_i),\ZZ),
\end{eqnarray}
we have
\begin{eqnarray}
  \label{eq:ineg}
\langle \tw\lambda_i, \tnu \rangle+\langle w\lambda_i,\nu\rangle  \geq 0.
\end{eqnarray}
Moreover, one can omit no inequalities in the above list.
\end{thm}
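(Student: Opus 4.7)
The strategy is to deduce this theorem from the first author's characterization of $\lr(\tG,G)$ in \cite{Re07} by re-expressing its indexing condition in terms of the comorphism $\phi_\lambda^\odot$ introduced in Section~\ref{section main}. Specifically, \cite{Re07} already provides a minimal list of inequalities of the form \eqref{eq:ineg} characterizing $\lr(\tG,G)$ inside $\Chi(\tH)_\QQ^+\times\Chi(H)_\QQ^+$; there the inequalities are indexed by those pairs $(\tw,w)\in\tW^{\tP(\lambda_i)}\times W^{P(\lambda_i)}$ for which $(w,\tw^\vee)$ is Levi-movable with respect to $\phi_{\lambda_i}$ and the associated branching Schubert calculus identity $d_w^{\tw^\vee}=1$ holds. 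The task is therefore to show that this indexing set coincides with the one defined in the statement above.

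To carry out this matching, I would expand $\phi_{\lambda_i}^\odot([\Lambda_w])=\sum_{\tilde u}c_w^{\tilde u}[\tLambda_{\tilde u}]$ and use that $\phi_{\lambda_i}^\odot$ is a ring homomorphism with respect to $\odot_0$ by Theorem~\ref{Thm1}. In $H^*(\tG/\tP(\lambda_i))$ the Belkale-Kumar product agrees with the ordinary cup product in complementary codimension, so $[\tLambda_{\tilde u}]\odot_0[\tLambda_\tw]=\delta_{\tilde u,\tw^\vee}\,[\tLambda_e]$ whenever $\codim\tLambda_{\tilde u}+\codim\tLambda_\tw=\dim\tG/\tP(\lambda_i)$. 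Consequently,
$$\phi_{\lambda_i}^\odot([\Lambda_w])\odot_0[\tLambda_\tw]=[\tLambda_e]$$
is equivalent to the single equation $c_w^{\tw^\vee}=1$. By Theorem~\ref{Thm1}, the nonvanishing of $c_w^{\tw^\vee}$ is equivalent to Levi-movability of $(w,\tw^\vee)$ with respect to $\phi_{\lambda_i}$, and in that case $c_w^{\tw^\vee}=d_w^{\tw^\vee}$. Hence the new indexing condition coincides with the condition from \cite{Re07}, and both the characterization of $\lr(\tG,G)$ by the inequalities \eqref{eq:ineg} and the irredundancy of the list follow immediately from \cite{Re07}.

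The main technical obstacle is the bookkeeping in the translation of indexing conditions. One must verify that the identity $[\tLambda_{\tilde u}]\odot_0[\tLambda_\tw]=\delta_{\tilde u,\tw^\vee}[\tLambda_e]$ in complementary codimension genuinely isolates the unique $\tilde u$ that can contribute to the top-degree part of $\phi_{\lambda_i}^\odot([\Lambda_w])\odot_0[\tLambda_\tw]$ (this uses the non-negativity and the Poincar\'e duality statement essentially contained in \cite[Proposition 17]{BK06}), and that the normalization $c_w^{\tw^\vee}=1$ matches the Schubert calculus identity in \cite{Re07}. Once this bookkeeping is complete, no new eigencone-theoretic ingredients are needed; the point of $\phi_\lambda^\odot$ is precisely to package the Levi-movability condition of \cite{Re07} into a clean Schubert calculus equation.
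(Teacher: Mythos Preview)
Your proposal is correct and follows essentially the same route as the paper. The paper's proof invokes \cite[Theorems~A and B]{Re07}, where the indexing condition is phrased as $\phi_{\lambda_i}^*([\Lambda_w])\cdot[\tLambda_\tw]=[{\rm pt}]$ together with a numerical $\rho$-condition, and then appeals directly to Proposition~\ref{Prop_char} to identify this with the $\odot_0$-condition; you instead phrase the \cite{Re07} condition as Levi-movability with $d_w^{\tw^\vee}=1$ and bridge via Theorem~\ref{Thm1}, which is the same argument one layer removed (Theorem~\ref{Thm1} itself rests on Proposition~\ref{Prop_char}).
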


\begin{proof}
  Let $\rho$ and $\trho$ denote the half sum of all positive roots of  $\gothG$ and $\tgothG$ respectively.  By \cite[Theorems~A and B]{Re07}, we only have to prove that
 $\phi_{\lambda_i}^\odot([\Lambda_w])\odot_0[\tLambda_\tw]
=[{\rm pt}]$ if and only if
 $\phi_{\lambda_i}^*([\Lambda_w]).[\tLambda_\tw]
=[{\rm pt}]$ and ``$\langle \tw\lambda_i,\trho\rangle+\langle w\lambda_i,\rho\rangle=
\langle \lambda_i,\trho\rangle+\langle\lambda_i,2\rho^{\lambda_i}-\rho\rangle $ ''
(with notation of \cite{Re07}).
This follows immediately from Proposition~\ref{Prop_char}.
\end{proof}

\begin{rem}
In \cite{Re08}, the first author gives a bijective parametrization of the faces of $\lr(\tG,G)$ which intersect
the interior of the dominant chamber. The morphism $\phi_\lambda^\odot$ can also be used to simplify
the statements of \cite{Re08}.
For example, with notation of \cite[Paragraph~7.2.3]{Re08}, the conditions
\begin{enumerate}
\item $\phi_\lambda^*([\overline{BwP(\lambda)/P(\lambda)}])\cdot[\overline{\tilde B\tP(\lambda)/\tP(\lambda)}]
=[{\rm pt}]\in{\rm H}^*(\tG/\tP(\lambda),\ZZ)$;
\item $(\theta^{P(\lambda)}_w)_{|\tS}=(\theta^{\tP(\lambda)}-2(\trho-\trho^\tS))_{|\tS}$.
\end{enumerate}
are equivalent to
$$
\phi_\lambda^\odot([\overline{BwP(\lambda)/P(\lambda)}])\odot_0[\overline{\tilde B\tP(\lambda)/\tP(\lambda)}]
=[{\rm pt}]\in{\rm H}^*(\tG/\tP(\lambda),\ZZ).
$$
\end{rem}

\bibliographystyle{abbrv}
\bibliography{references}

\end{document}